\documentclass[12pt]{amsart}


\usepackage[top=1in, bottom=1in, left=1in, right=1in]{geometry}
\usepackage{times}

\usepackage{amssymb,amsmath,amsthm,mathtools,mathrsfs,bbm}
\usepackage{enumitem}

%

\setlist[1]{itemsep=0.5em, topsep=0.5em}

\usepackage[table,dvipsnames]{xcolor}
\usepackage{color}
\definecolor{red}{rgb}{1,0,0}
\definecolor{orange}{rgb}{0.7,0.3,0}
\definecolor{blue}{rgb}{0,.3,.7}
\definecolor{green}{rgb}{0,.6,.4}

\PassOptionsToPackage{hyphens}{url}\usepackage[colorlinks=true, linkcolor=NavyBlue, citecolor=teal, urlcolor=gray]{hyperref}   

\urlstyle{rm}

\renewcommand{\le}{\leqslant}
\renewcommand{\leq}{\leqslant}
\renewcommand{\ge}{\geqslant}
\renewcommand{\geq}{\geqslant}{}



\numberwithin{equation}{section}





\theoremstyle{plain}
\newtheorem{theorem}{Theorem}
\newtheorem{cor}{Corollary}[section]
\newtheorem{corollary}[cor]{Corollary}

\newtheorem{lemma}[cor]{Lemma}

\newtheorem{proposition}[cor]{Proposition}

\theoremstyle{remark}

\newtheorem*{rem*}{Remark}
\newtheorem*{rems*}{Remarks}

\theoremstyle{definition}

\newtheorem{definition}{Definition}



\newcommand{\N}{\mathbb{N}}
\newcommand{\Z}{\mathbb{Z}}

\newcommand{\R}{\mathbb{R}}

\newcommand{\A}{\mathbf{A}}

\newcommand{\E}{\mathbb{E}}
\renewcommand{\P}{\mathbb{P}}

\newcommand{\CS}{\mathcal{S}}

\newcommand{\CU}{\mathcal{U}}

\newcommand{\CY}{\mathcal{Y}}


\newcommand{\Log}{\operatorname{Log}}

\newcommand{\eps}{\varepsilon}
\renewcommand{\phi}{\varphi}
\renewcommand{\rho}{\varrho}

\newcommand{\bs}\boldsymbol{}

\newcommand{\bg}{\big}
\newcommand{\bgg}{\Big}
\newcommand{\bggg}{\bigg}

\definecolor{red}{rgb}{1,0,0}

\definecolor{orange}{rgb}{0.7,0.3,0}

\definecolor{blue}{rgb}{.2,.6,.75}

\definecolor{green}{rgb}{.4,.7,.4}




\newcommand{\one}{\ensuremath{\mathbbm{1}}} 


\begin{document}

\title{A lower bound on the mean value of the Erd\H os--Hooley Delta function}

\author{Kevin Ford}
\address{Department of Mathematics \\
University of Illinois Urbana-Champaign\\
1409 W. Green Street \\
Urbana, IL 61801
USA}
\email{{\tt ford126@illinois.edu}}

\author{Dimitris Koukoulopoulos}
\address{D\'epartement de math\'ematiques et de statistique\\
Universit\'e de Montr\'eal\\
CP 6128 succ. Centre-Ville\\
Montr\'eal, QC H3C 3J7\\
Canada}
\email{{\tt dimitris.koukoulopoulos@umontreal.ca}}

\author{Terence Tao}
\address{Department of Mathematics\\
UCLA \\
405 Hilgard Ave\\
Los Angeles, CA 90095\\
USA}
\email{{\tt tao@math.ucla.edu}}

\subjclass[2010]{Primary: 11N25; Secondary: 11N37, 11N64}
\keywords{Erd\H os--Hooley Delta function, divisors of integers, concentration function}

\date{\today}

\begin{abstract}
 We give an improved lower bound for the average of the Erd\H os--Hooley function 
 $\Delta(n)$, namely $\sum_{n\le x} \Delta(n) \gg_\varepsilon x(\log\log x)^{1+\eta-\varepsilon}$ for all $x\ge100$ and any fixed $\varepsilon$, where $\eta = 0.3533227\dots$ is an exponent previously appearing in work of Green and the first two authors.   This improves on a previous lower bound of $\gg x \log\log x$ of Hall and Tenenbaum, and can be compared to the recent upper bound of $x (\log\log x)^{11/4}$ of the second and third authors.
\end{abstract}

\maketitle

\section{Introduction}

The \emph{Erd\H os--Hooley Delta function} is defined for a natural number $n$ as
\[
\Delta(n) \coloneqq \max_{u \in \R} \#\{d|n: e^u<d\le e^{u+1}\} .
\]
Erd\H os introduced this function in the 1970s \cite{erdos1,erdos2} and studied certain aspects of its distribution in joint work with Nicolas \cite{erdos-nicolas,erdos-nicolas2}. However, it was not until the work of Hooley in 1979 that $\Delta$ was studied in more detail \cite{hooley}. Specifically, Hooley proved that
\begin{equation}
	\label{eq:hooley's estimate}
	\sum_{n\le x} \Delta(n) \ll x (\Log x)^{\frac{4}{\pi}-1} 
\end{equation}
for any $x\ge1$. Here and in the sequel we use the notation
\[
\Log x \coloneqq \max\{1, \log x\} \quad\text{for}\ x>0,
\]
and also define
\[
\Log_2 x \coloneqq \Log(\Log x); \quad
\Log_3 x \coloneqq \Log(\Log_2 x); \quad
\Log_4 x \coloneqq \Log(\Log_3 x).
\]
See also Section \ref{notation-sec} below for our asymptotic notation conventions.

Hooley's estimate \eqref{eq:hooley's estimate} has been improved by several authors \cite{HT1, HT2, HT3}, \cite{HT-book}, \cite{breteche}, \cite{KT}, culminating in the bounds
\begin{equation}
	\label{eq:HT}
	x\Log_2 x\ll \sum_{n\le x}\Delta(n) \ll x (\Log_2 x)^{11/4},
\end{equation}
with the lower bound established by Hall and Tenenbaum in \cite{HT1} (see also \cite[Theorem 60]{HT-book}), and the upper bound recently established in \cite{KT}. The main result of the present paper is an improvement of the lower bound in \eqref{eq:HT}. Our estimate is given in terms of the best known lower bounds for the normal order of $\Delta$, so we discuss these first. 

In \cite{breteche}, La Bret\`eche and Tenenbaum proved that
\[
\Delta(n) \le (\Log_2 x)^{\theta + \eps}
\]
for all fixed $\eps>0$ and all but $o(x)$ integers $n\in[1,x]$ as $x \to \infty$, where $\theta\coloneqq \frac{\log2}{\log2+1/\log2-1}=0.6102\dots$.  

To state the best known lower bound for the normal order, we need some additional notation, essentially from \cite{fgk}.

\begin{definition}\label{etastar-def}  If $A$ is a finite set of natural numbers, the \emph{subsum multiplicity} $m(A)$ of $A$ is defined
to be the largest number $m$ so that there area distinct subsets $A_1,\dots,A_m$ of $A$ such that
\begin{equation}\label{a-sum}
 \sum_{a \in A_1} a = \dots = \sum_{a \in A_m} a.
\end{equation}

One can think of the subsum multiplicity as a simplified model for the Erd\H{o}s--Hooley Delta function.

Now take $\A$ to be a random set of natural numbers, in which each natural number $a$ lies in $\A$ with an independent probability of $\P(a \in \A) = 1/a$.  If $k$ is a natural number, we define $\beta_k$ to be the supremum of all constants $c<1$ such that
\[
\lim_{D \to \infty} \P\Big( m\big( \A \cap [D^c, D] \big) \geq k \Big) = 1.
\]
It is shown in \cite{fgk}, by building on work in \cite{MT1}, that $\beta_k$ exists and is positive for all $k$.  We then define the quantity
\begin{equation}\label{eta-star}
\eta_* \coloneqq \liminf_{k \to \infty} \frac{\log k}{\log(1/\beta_k)},
\end{equation} 
thus $\eta_*$ is the largest exponent for which one has $\beta_k \geq k^{-1/\eta_*-o(1)}$ as $k \to \infty$.
\end{definition}

The main results of \cite{fgk} can then be summarized as follows:

\begin{theorem} \cite{fgk}
\begin{itemize}
\item[(i)]  We have $\eta_* \geq \eta$, where $\eta=0.353327\dots$ is defined by the formula
\begin{equation}\label{eta-def}
\eta \coloneqq \frac{\log 2}{\log(2/\rho)},
\end{equation}
and $\rho$ is the unique number in $(0,1/3)$ satisfying the equation $1 - \rho/2 = \lim_{j\to\infty} 2^{j-2}/\log a_j$    with $a_1 = 2$, $a_2 = 2 + 2^{\rho}$ and $a_j = a_{j-1}^2 + a_{j-1}^{\rho} - a_{j-2}^{2\rho}$ for $j\in\Z_{\ge3}$.
\item[(ii)] For any $\eps>0$, one has the lower bound
\begin{equation}\label{eta-eps}
\Delta(n) \geq (\Log_2 x)^{\eta_* - \eps}
\end{equation}
for all but $o(x)$ integers $n\in[1,x]$ as $x \to \infty$.
\end{itemize}
\end{theorem}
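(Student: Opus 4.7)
The plan is to treat the two parts separately. Part (i) is a combinatorial-probabilistic lower bound on the subsum multiplicity attainable in narrow multiplicative windows of the random set $\A$, while part (ii) transfers this to the arithmetic setting by matching $\A$ with the set of logarithms of prime factors of a random integer.

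For part (i), the approach is a recursive doubling construction. Inductively one builds a set $A_j \subseteq [1,a_j] \cap \N$ with $m(A_j) \ge 2^j$, together with a distinguished ``target sum'' $s_j$ realized by at least $2^j$ distinct subsets of $A_j$. The doubling step forms $A_j$ as the disjoint union of two suitably rescaled and translated copies of $A_{j-1}$; by pairing target subsets across the two copies in a consistent way one doubles the number of subsets attaining a common new target sum $s_j$. The recurrence $a_j = a_{j-1}^2 + a_{j-1}^{\rho} - a_{j-2}^{2\rho}$ records the placement of the second copy: the $a_{j-1}^2$ term is the dominant scale shift, and the correction $a_{j-1}^\rho - a_{j-2}^{2\rho}$ is chosen to enforce exact alignment of sums across nested levels. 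The parameter $\rho$ is the largest exponent for which these corrections nest consistently on every scale, pinning down $\rho \in (0,1/3)$ and the limit identity $1 - \rho/2 = \lim_j 2^{j-2}/\log a_j$. A first-moment / Borel--Cantelli argument then shows that a random scaled copy of $A_j$ survives inside $\A \cap [D^{c_j}, D]$ with probability $\to 1$ provided $c_j \le 1 - (\log a_j)/\log D$; optimizing over $j$ yields $\beta_{2^j} \ge (\rho/2)^{j(1+o(1))}$, hence $\eta_* \ge \log 2/\log(2/\rho) = \eta$.

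For part (ii), the central observation is that for a uniformly random $n \le x$, the indicator variables $\one_{p \mid n}$ with $p \le x$ are approximately independent with $\P(p \mid n) = 1/p + O(1/x)$, so by Mertens' theorem the random multiset $\{\log p : p \mid n\}$ is distributionally close to $\A$ after the correspondence $a \leftrightarrow \log p$ restricted to the scale $\Log_2 x$. Any collection of subsets $S \subseteq \{p : p \mid n\}$ whose logarithmic sums $\sum_{p \in S}\log p$ lie in a common unit interval $[u, u+1]$ yields distinct divisors $d = \prod_{p \in S} p \in (e^u, e^{u+1}]$, so $m$ such subsets give $\Delta(n) \ge m$. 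Applying a Tur\'an--Kubilius-type second-moment bound to handle the mild dependence of the $\one_{p\mid n}$, together with a pigeonhole step to localize equal sums to a common dyadic window, one concludes $\Delta(n) \ge (\Log_2 x)^{\eta_* - \eps}$ for all but $o(x)$ integers $n \le x$.

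The main obstacle lies in part (i): controlling the recursion so that the correction $a_{j-1}^\rho - a_{j-2}^{2\rho}$ is simultaneously large enough to enforce the required sum identities across all levels and small enough that $\log a_j$ grows at exactly the rate $2^{j-2}/(1-\rho/2)$. This delicate balance is precisely what forces $\rho$ to satisfy its defining fixed-point equation, and it pins down the numerical value of $\eta$. Part (ii) is conceptually cleaner --- the main technical input is a careful second-moment (Kubilius-model) computation to pass from the random set $\A$ to the set of log-primes of a typical integer.
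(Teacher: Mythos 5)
This theorem is not proved in the paper at all: it is imported as a black box from \cite{fgk}, a long and difficult work, so the only question is whether your sketch could plausibly stand in for the argument given there, and it cannot. The decisive step of part (i) is asserted rather than proved, and as stated it fails. You claim that a ``first-moment / Borel--Cantelli argument'' shows that a random scaled copy of the deterministic set $A_j$ survives inside $\A \cap [D^{c_j},D]$ with probability tending to $1$. But each $a$ lies in $\A$ independently with probability $1/a \le D^{-c_j}$, so any \emph{fixed} planted configuration of $s$ prescribed elements is present with probability at most $D^{-c_j s}$; a first moment over all placements of the pattern only controls an expectation, Borel--Cantelli is not even the right tool for a statement of the form $\lim_{D\to\infty}\P(\cdot)=1$ at a single scale, and nothing in your sketch shows concentration, let alone that this planting mechanism achieves an exponent $c$ anywhere near $\beta_k$. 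More fundamentally, the mechanism in \cite{fgk} (building on Maier--Tenenbaum \cite{MT1}) is not the planting of a deterministic pattern: the equal-sum structures are found inside the random set itself, and the quantity that iterates is the subsum multiplicity via the supermultiplicativity $m(A_1\cup A_2)\ge m(A_1)m(A_2)$ over disjoint blocks of scales, so multiplicities are essentially \emph{squared} at each stage --- which is precisely why the recursion reads $a_j = a_{j-1}^2 + a_{j-1}^\rho - a_{j-2}^{2\rho}$ --- rather than doubled as in your $m(A_j)\ge 2^j$ scheme. Your account of the recursion (``placement of the second copy'', corrections ``chosen to enforce exact alignment'') is numerology consistent with the final answer (indeed $\beta_{2^j}\ge(\rho/2)^{j(1+o(1))}$ would give $\eta$), but it contains no argument that could be checked or repaired, and in particular the existence, uniqueness and value of $\rho$, and the limit identity $1-\rho/2=\lim_j 2^{j-2}/\log a_j$, are exactly the delicate fixed-point analysis that your ``balance'' paragraph waves at.

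Part (ii) is closer in spirit to a standard model-comparison (and resembles, in the opposite direction, what the present paper does in Section 5 with the events $\{a\in\A\}$ built from the $n_p$), but it is also only an outline: the event being transferred is a structural ``with probability $1-o(1)$'' statement about many equal subset sums of the log-primes, not a mean-value statement, so the Tur\'an--Kubilius inequality by itself does not effect the transfer; one needs a quantitative comparison of the two probabilistic models as carried out in \cite{fgk}. In short: the correct move here is to cite the theorem, as the paper does; if you intend to actually prove it, the doubling/planting mechanism of part (i) must be replaced by the genuinely different, and much longer, iterative argument of \cite{fgk}.
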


As a consequence of these results, we see that
\[
 (\Log_2 n)^{\eta - o(1)} \leq (\Log_2 n)^{\eta_* - o(1)} \leq \Delta(n) \leq (\Log_2 n)^{\theta + o(1)}
\]
as $n \to \infty$ outside of a set of zero natural density.  In particular $\eta \leq \eta_* \leq \theta$.  It is conjectured in \cite{fgk} that $\eta_* = \eta$ (and in fact $\beta_k = k^{-1/\eta-o(1)}$ as $k \to \infty$).

The main purpose of this note is to obtain an analogue of the lower bound in \eqref{eta-eps} for the mean value, thus improving the lower bound in \eqref{eq:HT}.

\begin{theorem}\label{main}  For any $\eps>0$ and all $x\ge 1$, we have
\[
\sum_{n\le x}\Delta(n)  \gg_\eps x (\Log_2 x)^{1+\eta_*-\eps}  \geq x (\Log_2 x)^{1+\eta-\eps}.
\]
\end{theorem}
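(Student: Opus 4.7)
The proof combines the FGK pointwise lower bound from Theorem (ii) with a Hall--Tenenbaum-style averaging argument.

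By Theorem (ii) one has $\Delta(m) \ge (\Log_2 m)^{\eta_* - \eps/2}$ for $m$ in a density-one set $\CG \subseteq \N$, and the basic monotonicity property $m \mid n \Rightarrow \Delta(m) \le \Delta(n)$ then upgrades this to $\Delta(n) \ge (\Log_2 m)^{\eta_* - \eps/2}$ for every multiple $n$ of a good $m$ (since divisors of $m$ are divisors of $n$ lying in the same log-windows). One also has the convolution identity
\[
\Delta(mq, u) = \sum_{d \mid q} \Delta(m, u - \log d) \qquad (\gcd(m,q)=1),
\]
which in particular yields the quasi-multiplicative inequality $\Delta(mq) \ge \tfrac{1}{2}\Delta(m)\Delta(q)$: if $D(m)$ and $D(q)$ realize the maxima in log-windows of length $1$, then the $\Delta(m)\Delta(q)$ products (distinct divisors of $mq$ by coprimality) lie in a combined window of length $2$, and by pigeonhole at least half land in a single sub-window of length $1$.

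My plan is to adapt the Hall--Tenenbaum argument for $\sum_{n \le x}\Delta(n) \gg x \Log_2 x$, injecting the FGK pointwise bound in place of the trivial input $\Delta(m) \ge 1$ that drives the HT argument. Schematically, one sums the quasi-multiplicative inequality over coprime pairs $(m,q)$ with $m \in \CG$ and $mq \le x$, then invokes the Hall--Tenenbaum bound $\sum_{q \le Q} \Delta(q) \gg Q \Log_2 Q$ to estimate the inner sum. This produces
\[
\sum_{n \le x} \Delta(n) \cdot \mathrm{mult}(n) \gg \sum_{m \in \CG} \Delta(m) \cdot \frac{x}{m} \Log_2(x/m) \cdot \frac{\phi(m)}{m},
\]
where $\mathrm{mult}(n)$ counts the admissible decompositions $n = mq$. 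Evaluating the RHS using the FGK lower bound on $\Delta(m)$ via a Dirichlet-type sum, and then dividing by an appropriate bound on $\mathrm{mult}(n)$, would yield the claimed $x(\Log_2 x)^{1 + \eta_* - \eps}$.

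The main technical obstacle is the control of $\mathrm{mult}(n)$. A naive count gives $\mathrm{mult}(n) \asymp 2^{\omega(n)}$, so dividing the main estimate by this produces a bound substantially stronger than the theorem (indeed, stronger than the KT upper bound $x(\Log_2 x)^{11/4}$), signaling that the unrestricted decomposition cannot work directly and that the $/2$ in quasi-multiplicativity is severely lossy when iterated. The likely resolution is to restrict the decomposition in a canonical way---e.g.\ taking $m$ to be the part of $n$ composed of primes below a carefully chosen threshold $y = y(x)$---so that each $n$ admits a unique pair $(m,q)$; the delicate task is then to calibrate $y$ (and the precise "good" subset of the smooth parts) so that the FGK input contributes the exponent $\eta_* - \eps$ on the inner $m$-sum while the contribution from the $y$-rough part $q$ supplies the additional logarithmic factor $\Log_2 x$ in HT style, rather than the full $\Log x$ that one would naively get.
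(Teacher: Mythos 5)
Your plan correctly identifies its own central difficulty (the naive sum over all coprime decompositions proves too much, so the $\tfrac12\Delta(m)\Delta(q)$ inequality cannot be iterated freely), but the proposed repair --- a canonical smooth/rough splitting at a single threshold $y$, with FGK's normal-order bound applied to one factor and Hall--Tenenbaum averaging applied to the other --- has two concrete gaps. First, the FGK pointwise bound $\Delta(m)\ge(\Log_2 m)^{\eta_*-\eps/2}$ is a density-one statement over \emph{all} integers, while the $y$-smooth part (or the $y$-rough part) of a typical $n\le x$ ranges over a density-zero set with a completely different multiplicative structure; membership of $n$ in a density-one set says nothing about its smooth or rough component, so Theorem~1(ii) cannot be invoked as a black box on either factor. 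This is exactly why the paper does \emph{not} use (ii): it re-runs the FGK machinery from the definition of $\eta_*$ (the random-set model, $\beta_k$, subsum multiplicity, and the tensor-power trick of Section~5) directly on the rough part $n_{[y,x)}$, and even then it only obtains a lower bound for the \emph{wide-window} variant $\Delta^{(\Log y)}(n_{[y,x)})$, because the discretization of primes into intervals $[\lambda^{a-2},\lambda^{a-1})$ compatible with the $\P(a\in\A)\approx 1/a$ model forces windows of length $\asymp\log y$, not $1$. The conversion back to unit windows is done by multiplying by \emph{all} $\tau(n_{<y})$ divisors of the smooth part and pigeonholing (Lemma~\ref{pigeon}), giving the factor $\tau(n_{<y})/\Log y$; your quasi-multiplicativity $\Delta(mq)\ge\tfrac12\Delta(m)\Delta(q)$, which uses only $\Delta(m)$ rather than $\tau(m)$, is both unusable (no valid input for $\Delta(m)$ on smooth $m$) and too weak for the dominant range, where the smooth part has $\approx 2\Log_2 y$ prime factors.

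Second, the extra factor $\Log_2 x$ does not come from one fixed threshold plus HT averaging: the HT bound $\sum_{q\le Q}\Delta(q)\gg Q\Log_2 Q$ is for unrestricted $q$ and fails to transfer to $y$-rough $q$, and for a single $y$ the expected size of $\frac{\tau(n_{<y})}{\Log y}\Delta^{(\Log y)}(n_{[y,x)})$ is only $(\Log_2 x)^{\eta_*-\eps}$, one log short. In the paper the additional logarithm is produced by summing over the $\asymp\Log_2 x$ values of $r=\omega(n_{<x})$ in $[(1+\eps)\Log_2 x,(2-\eps)\Log_2 x]$ (Theorem~\ref{main-2}), with the split point $y$ chosen as a function of $r$; fixing $\omega(n_{<x})=r$ then costs an Erd\H{o}s--Kac factor $\asymp(\Log_2 x)^{-1/2}$, which is recovered by letting $y$ range over the $\asymp\sqrt{\Log_2 x}$ thresholds in $\CY$ and running a second-moment/Cauchy--Schwarz argument (Lemma~\ref{moment}) to show the events for different $y$ are essentially disjoint. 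A further structural point: your deterministic decomposition requires a joint statement about the two factors of the same $n$, whereas the paper passes to a random squarefree model $n_{<x}=n_{<y}n_{[y,x)}$ precisely so that the smooth and rough parts are independent. None of these mechanisms appear in your sketch, so as it stands the argument does not reach $x(\Log_2 x)^{1+\eta_*-\eps}$.
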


Very informally, the idea of proof of the theorem is as follows.  Our task is to show that the mean value of $\Delta(n)$ is at least $(\Log_2 x)^{1+\eta_*-\eps}$.  In our arguments, it will be convenient to use a more ``logarithmic'' notion of mean in which $n$ is square-free and the prime factors of $n$ behave completely independently; see Section \ref{notation-sec} for details.  It turns out that for each natural number $r$ in the range
\begin{equation}
	\label{eq:r range flatness of Delta}
	(1+\eps) \Log_2 x \leq r \leq (2-\eps) \Log_2 x,
\end{equation}
there is a significant contribution (of size $\gg_\eps (\Log_2 x)^{\eta_*-\eps}$), arising from (squarefree) numbers $n$ whose number, $\omega(n)$, of prime factors is precisely $r$; summing in $r$ will recover the final factor of $\Log_2 x$ claimed. 

Suppose we write $r = (1+\alpha) \Log_2 x$ for some $\eps \leq \alpha \leq 1-\eps$ and we define $y$ such that.
\[
\Log_2 y = \alpha \Log_2 x + O(\sqrt{\Log_2 x}).
\]
It turns out that the dominant contribution to the mean from those numbers with $\omega(n)=r$ comes from those $n$ that factor as $n=n' n''$, where 
$n'$ is composed of primes $<y$, $n''$ is composed of primes $\ge y$,
$\omega(n') = 2\alpha \Log_2 x + O(\sqrt{\Log_2 x})$ and 
$\omega(n'') = (1-\alpha) \Log_2 x + O(\sqrt{\Log_2 x})$. 

 A pigeonholing argument (see Lemma \ref{pigeon}) then gives a lower bound roughly of the form
\[
\Delta(n) \gg (\Log_2 x)^{-o(1)} \max_y \frac{\tau(n')}{\log y} \Delta^*(n''),
\]
where $\Delta^*(m)$ is the maximum number of divisors of $m$ in an interval of the
form $(e^{u}, y e^u]$. The two factors $\frac{\tau(n')}{\log y}$ and $\Delta^*(n'')$ behave independently.  The arguments from \cite{fgk} will allow us to ensure that 
$\Delta^*(n'')\gg (\Log_2 x)^{\eta_* - \eps/2}$ with high probability, while the constraints on $n'$ basically allow us to assert that the $\frac{\tau(n')}{\log y}$ factor has bounded mean (after summing  over all the possible values of $\omega(n_{<y})$).  There is an unwanted loss of about $\frac{1}{\sqrt{\Log_2 x}}$ (related to the Erd\H{o}s--Kac theorem) coming from the restriction to $n$ having exactly $r$ prime factors, but this loss can be recovered by summing over the $\asymp \sqrt{\Log_2 x}$ essentially distinct possible values of $y$, after showing some approximate disjointness between events associated to different $y$ (see Lemma \ref{moment}).

\begin{rems*}
(a)  The above-described behavior that $\Delta$ seems to exhibit is rather unusual. For most arithmetic functions $f$, there is some constant $\varrho>0$ such that the dominant contribution to the partial sums $\sum_{n\le x}f(n)$ comes from integers $n$ with about $r_0\coloneqq \varrho\Log_2x$ prime factors. If we let $S_r(x)= \sum_{n\le x,\ \omega(n)=r}f(n)$, then we often have Gaussian-like decay as $r$ moves away from $r_0$, meaning that $S_r(x)\approx S_{r_0}(x) e^{-c(r-r_0)^2/\Log_2x}$ for some $c>0$ (e.g. when $f$ is the $k$-th divisor function). For some arithmetic functions, we have the even stronger exponential decay $S_r(x)\approx S_{r_0} e^{-c|r-r_0|}$ for some $c'>0$  (e.g. when $f$ is the indicator function of integers with a divisor in a given dyadic interval $[y,2y]$ - see \cite{ford:annals}). However, in the case of $f=\Delta$, the sums $S_r(x)$ appear to remain of roughly the same size for all $r$ in the range \eqref{eq:r range flatness of Delta};
  the lower bounds we obtain in Theorem \ref{main-2} below
  for $S_r(x)$ are of the same order for
  all $r$ in the range \eqref{eq:r range flatness of Delta}.
  
  \medskip
  
 (b) A few months after the publication of an arXiv preprint of the present paper, La Bret\`eche and Tenenbaum \cite{breteche-tenenbaum} proved the following result:
 \[
 	x(\Log_2 x)^{3/2} \ll \sum_{n\le x}\Delta(n) \ll x (\Log_2 x)^{5/2}\qquad(x\ge 1).
 \]
This improves Theorem \ref{main} and the upper bound in \eqref{eq:HT} that was proven in \cite{KT}. To obtain this improvement,  La Bret\`eche and Tenenbaum refined the methods of \cite{KT} and of the present paper.
\end{rems*}

\subsection*{Acknowledgments} 
KF is supported by National Science Foundation Grants DMS-1802139 and
DMS-2301264.

DK is supported by the Courtois Chair II in fundamental research, by the Natural Sciences and Engineering Research Council of Canada (RGPIN-2018-05699) and by the Fonds de recherche du Qu\'ebec - Nature et technologies (2022-PR-300951).

TT is supported by NSF grant DMS-1764034.

\section{Notation and basic estimates}\label{notation-sec}

We use $X \ll Y$, $Y \gg X$, or $X = O(Y)$ to denote a bound of the form $|X| \leq CY$ for a constant $C$.  If we need this constant to depend on parameters, we indicate this by subscripts, for instance $X \ll_k Y$ denotes a bound of the form $|X| \leq C_k Y$ where $C_k$ can depend on $k$.  We also write $X \asymp Y$ for $X \ll Y \ll X$.  All sums and products will be over natural numbers unless the variable is $p$, in which case the sum will be over primes. We use $\one\{E\}$ to denote the indicator of a statement $E$, thus $\one\{E\}$ equals $1$ when $E$ is true and $0$ otherwise. In addition, we write $\neg E$ for the negation of $E$. We use $\P$ for probability and $\E$ for probabilistic expectation. 

Given an integer $n$, we write $\tau(n)\coloneqq\sum_{d|n}1$ for its divisor-function and $\omega(n)\coloneqq\sum_{p|n}1$ for the number of its distinct prime factors.

It will be convenient to work with the following random model of squarefree integers.  For each prime $p$, let $n_p$ be a random variable equal to $1$ with probability $\frac{p}{p+1}$, and $p$ with probability $\frac{1}{p+1}$, independently in $p$.  Then for any $x \geq 1$, define the random natural number
\[
n_{<x} \coloneqq \prod_{p < x} n_p
\]
and similarly for any $1 \leq y < x$ define the natural number
\[
n_{[y,x)} \coloneqq \prod_{y \leq p < x} n_p.
\]
In particular we may factor $n_{<x}$ into independent factors $n_{<x} = n_{<y} n_{[y,x)}$ for any $1 \leq y < x$.
Observe that $n_{<x}$ takes values in the set  $\CS_{<x}$ denoting the set of square-free numbers, all of whose prime factors $p$ are such that $p<x$, with
\[ 
\P( n_{<x} = n ) = \frac{1}{n} \prod_{p<x} \bigg(1 + \frac{1}{p}\bigg)^{-1}
\]
for all $n \in \CS_{<x}$. In particular, from Mertens' theorem we have
\begin{equation}\label{efn}
 \E\big[ f(n_{<x})\big] \asymp \frac{1}{\Log x} \sum_{n \in \CS_{x}} \frac{f(n)}{n}
\end{equation}
for any non-negative function $f \colon \N \to \R^+$. 

We further note that
\begin{equation}\label{efn-2}
\begin{split}
 \E \big[ f(n_{<x}) \log n_{<x}\big] &= \sum_{p<x} \E \big[f(n_{<x}) \one\{p|n_{<x}\} \log p \big]  \\
&= \sum_{p<x} \E\bigg[ \frac{f(p n_{<x}) \log p}{p} \one\{p \nmid n_{<x}\} \bigg].
\end{split}
\end{equation}

We can also generalize \eqref{efn} to
\begin{equation}\label{efn-3}
 \E\big[ f(n_{[y,x)})\big] \asymp \frac{\Log y}{\Log x} \sum_{n \in \CS_{[y,x)}} \frac{f(n)}{n}
\end{equation}
where $\CS_{[y,x)}$ denotes the set of square-free numbers, all of whose prime factors 
lie in $[y,x)$.

We have the following elementary inequality:

\begin{proposition}  For any $x \geq 1$, we have
\[
\sum_{n \leq x} \Delta(n) \gg x \E \big[\Delta(n_{<x^{1/10}})\big] .
\]
\end{proposition}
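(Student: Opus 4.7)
The plan is to first prove the inequality with $\Delta(n_{<y})$ replaced by $\Delta(n_{<y})\one\{n_{<y}\le y^{8}\}$, where $y\coloneqq x^{1/10}$, via a sieve argument, and then show that this truncation is lossless up to a constant using \eqref{efn-2} together with the elementary bound $\Delta(pn)\le 2\Delta(n)$ for $p\nmid n$. Assume $x$ is sufficiently large (the bound is trivial otherwise).

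For the sieve step, if $m\in\CS_{<y}$ and $\ell\le x/m$ is a $y$-rough integer (every prime factor at least $y$), then $n\coloneqq m\ell$ is a distinct integer in $[1,x]$ with $\Delta(n)\ge\Delta(m)$ because $m\mid n$; distinct pairs $(m,\ell)$ give distinct $n$, since $m$ and $\ell$ can be recovered from $n$ as the products of its prime factors below $y$ and at least $y$, respectively. Therefore
\[
\sum_{n\le x}\Delta(n) \;\ge\; \sum_{m\in\CS_{<y}} \Delta(m)\,\#\bigl\{\ell\le x/m:\ p\mid\ell \Rightarrow p\ge y\bigr\}.
\]
Standard sieve estimates (Mertens plus Buchstab) show the inner count is $\gg (x/m)/\log y$ whenever $x/m\ge y^{2}$, i.e., $m\le y^{8}$. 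Restricting to such $m$ and applying \eqref{efn} to $f(n)=\Delta(n)\one\{n\le y^{8}\}$ gives
\[
\sum_{n\le x}\Delta(n) \;\gg\; \frac{x}{\log y}\sum_{\substack{m\in\CS_{<y}\\ m\le y^{8}}}\frac{\Delta(m)}{m} \;\asymp\; x\,\E\bigl[\Delta(n_{<y})\one\{n_{<y}\le y^{8}\}\bigr].
\]

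It remains to show $\E[\Delta(n_{<y})\one\{n_{<y}\le y^{8}\}]\gg\E[\Delta(n_{<y})]$. The elementary pointwise inequality $\Delta(pn)\le 2\Delta(n)$ for $p\nmid n$ holds because divisors of $pn$ are of the form $d$ or $pd$ with $d\mid n$, so any interval $(e^{u},e^{u+1}]$ catches at most $\Delta(n)$ of each type. Combining this with \eqref{efn-2} applied to $f=\Delta$,
\[
\E[\Delta(n_{<y})\log n_{<y}] = \sum_{p<y}\frac{\log p}{p}\,\E\bigl[\Delta(pn_{<y})\one\{p\nmid n_{<y}\}\bigr] \le 2\,\E[\Delta(n_{<y})]\sum_{p<y}\frac{\log p}{p} \ll \log y\cdot\E[\Delta(n_{<y})].
\]
Markov's inequality now yields $\E[\Delta(n_{<y})\one\{n_{<y}>y^{8}\}] \le \E[\Delta(n_{<y})\log n_{<y}]/(8\log y) \le (C/8)\E[\Delta(n_{<y})]$ for an absolute constant $C$ close to $2$ (from Mertens), so the complementary event retains a positive fraction of $\E[\Delta(n_{<y})]$, as desired.

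The main obstacle is precisely this final tail estimate: naive bounds via $\Delta\le\tau$ and Cauchy--Schwarz lose polylog factors in $y$ and are insufficient, because $\E[\Delta(n_{<y})]$ is expected to be of size only $(\log_2 y)^{O(1)}$. The combination of \eqref{efn-2} and the pointwise bound $\Delta(pn)\le 2\Delta(n)$ provides exactly the correct $\log y$ scaling required by the Markov bound.
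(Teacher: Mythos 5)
Your proof is correct and follows essentially the same route as the paper: extend a squarefree $y$-smooth $m$ by a large cofactor to land in $[1,x]$ (you use $y$-rough integers counted by a sieve, the paper uses a single prime in $(\sqrt{x},x/m]$ via the Prime Number Theorem), convert to an expectation via \eqref{efn}, and then remove the truncation by exactly the paper's tail argument combining Markov's inequality with \eqref{efn-2}, the pointwise bound $\Delta(pn)\le 2\Delta(n)$ for $p\nmid n$, and Mertens' theorem. The only differences are the cutoff ($y^{8}$ versus $\sqrt{x}/2$) and the sieve versus prime-counting step, which are immaterial.
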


\begin{proof} We may take $x$ to be sufficiently large.  Restricting attention to numbers $n \leq x$ of the form $n=mp$ where $m \leq \sqrt{x}/2$ and $\sqrt{x} < p \leq x/m$, we observe that $\Delta(n) \geq \Delta(m)$, and thus
\[
\sum_{n \leq x} \Delta(n) \geq \sum_{m \leq \sqrt{x}/2} \Delta(m) \sum_{\sqrt{x} < p \leq x/m} 1. 
\]
Hence, the Prime Number Theorem \cite[Theorem 8.1]{dk-book} implies that
\[
\sum_{n \leq x} \Delta(n) \gg \frac{x}{\log x} \sum_{m \leq \sqrt{x}/2} \frac{\Delta(m)}{m}.
\]
Restricting further to those $m$ in $\CS_{<y}$, $y=x^{1/10}$, we conclude from \eqref{efn} that
\begin{equation}\label{recip} 
	\begin{split}
 \sum_{n \leq x} \Delta(n) 
 	&\gg x\,  \E \bgg[ \Delta(n_{<y}) \one\bg\{n_{<y} \leq \sqrt{x}/2\bg\} \bgg]\\
 	&= x\bgg(  \E \big[ \Delta(n_{<y})\big] 
 	-  \E \bgg[ \Delta(n_{<y}) \one\bg\{n_{<y} > \sqrt{x}/2\bg\} \bgg] \bggg).
 		\end{split}
\end{equation}
Note from Markov's inequality and \eqref{efn-2} that
\begin{align*}
\E \bgg[\Delta(n_{<y}) \one\bg\{n_{<y} > \sqrt{x}/2 \bg\} \bgg] 
	&\leq \frac{1}{\log (\sqrt{x}/2)}	\E\big[\Delta(n_{<y}) \log n_{<y}\big] \\
&\leq \frac{2}{\log x - O(1)} \sum_{p<y} \E \bggg[\frac{\Delta(p n_{<y}) \log p}{p} \one\{p\nmid n_{<y}\} \bggg] .
\end{align*}
We have $\Delta(p n_{<y}) \leq 2 \Delta(n_{<y})$ whenever $p\nmid n_{<y}$. Thus, $\Delta(p n_{<y})\one\{p\nmid n_{<y}\}\le 2\Delta(n_{<y})$. Using this bound and Mertens' theorem \cite[Theorem 3.4(a)]{dk-book}, we conclude that
\[
\E \bgg[\Delta(n_{<y}) \one\bg\{n_{<y} > \sqrt{x}/2\bg\} \bgg]
	\leq \frac{4 \big(\log( x^{1/10})+O(1)\big)}{\log x - O(1)}
\E \big[ \Delta(n_{<y}) \big] \le \frac12 \, \E\big[\Delta(n_{<y})\big]
\]
for large enough $x$.  Combined with \eqref{recip}, this concludes the proof.
\end{proof}

Thus, to prove Theorem \ref{main}, it will suffice (after replacing $x$ with $x^{1/10}$) to establish the lower bound
\begin{equation}\label{ednx}
\E \bg[\Delta(n_{<x})\bg]  \gg_\eps (\Log_2 x)^{1+\eta_*-\eps}
\end{equation}
for all $\eps>0$, and $x$ sufficiently large in terms of $\eps$.  In fact we will show the following stronger estimate.

\begin{theorem}\label{main-2}  Let $\eps>0$ and $x>0$, and let $r$ be an integer in the range
\[
(1+\eps) \Log_2 x \leq r \leq (2-\eps) \Log_2 x.
\]
Then
\[
\E\bg[\Delta(n_{<x}) \one\{\omega(n_{<x})=r\} \bg] 
	\gg_\eps (\Log_2 x)^{\eta_*-\eps}.
\]
\end{theorem}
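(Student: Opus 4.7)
The plan is to exploit the factorization $n_{<x}=n_{<y}\cdot n_{[y,x)}$ for a carefully chosen threshold $y$, using the independence of these two factors in our probabilistic model. Writing $\alpha\coloneqq r/\Log_2 x - 1\in[\eps,1-\eps]$, I would focus on $y$ with $\Log_2 y\approx \alpha\Log_2 x$, so that the means of $\omega(n_{<y})$ and $\omega(n_{[y,x)})$ are $\alpha\Log_2 x$ and $(1-\alpha)\Log_2 x$ respectively. As indicated in the introduction, the dominant contribution to $\E[\Delta(n_{<x})\one\{\omega(n_{<x})=r\}]$ comes from configurations where $\omega(n_{<y})$ is atypically large, around $2\alpha\Log_2 x$ (twice its mean), biasing $\tau(n_{<y})$ up to roughly $(\log y)^{2\log 2}$, while $\omega(n_{[y,x)})$ stays near its mean $(1-\alpha)\Log_2 x$ so that the overall constraint $\omega(n_{<x})=r$ is respected.

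The two main ingredients would be, first, a pigeonhole lower bound (Lemma \ref{pigeon}) asserting that on the event $\omega(n_{<y})\approx 2\Log_2 y$,
\[
\Delta(n_{<x})\gg (\Log_2 x)^{-o(1)}\,\frac{\tau(n_{<y})}{\log y}\,\Delta^*(n_{[y,x)}),
\]
where $\Delta^*(m)\coloneqq\max_u\#\{d\mid m:\, e^u<d\le ye^u\}$ is a wider-window variant of $\Delta$; and, second, a high-probability lower bound $\Delta^*(n_{[y,x)})\gg(\Log_2 x)^{\eta_*-\eps/2}$ coming from the construction behind $\eta_*$ in \cite{fgk}. Combining these with the independence of $n_{<y}$ and $n_{[y,x)}$, and using the Stirling/local-limit identity $2^k\P(\omega(n_{<y})=k)/\log y=\P(X=k)$ where $X$ is Poisson of parameter $2\Log_2 y$, would yield for each single $y$ a contribution of order $(\Log_2 x)^{\eta_*-\eps/2}$ times an Erd\H os--Kac-type loss coming from the additional constraint $\omega(n_{<x})=r$.

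To recover the missing factor, I would sum the bounds over a family of $\asymp\sqrt{\Log_2 x}$ essentially distinct thresholds $y_1<\dots<y_N$ with $\Log_2 y_j\in[\alpha\Log_2 x - C\sqrt{\Log_2 x},\,\alpha\Log_2 x + C\sqrt{\Log_2 x}]$, each attached to an event $F_j=\{\omega(n_{<y_j})=k_j\}$ for a suitable $k_j\approx 2\Log_2 y_j$, and then lower bound $\E[\Delta(n_{<x})\one\{\omega(n_{<x})=r\}]$ by the sum of per-$y_j$ contributions divided by the maximum multiplicity of the $F_j$. The approximate disjointness needed would be furnished by a second-moment computation (Lemma \ref{moment}) of the form $\sum_{j,j'}\P(F_j\cap F_{j'})\ll N\max_j\P(F_j)$, exploiting the independence structure of the factors $n_p$. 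The main obstacle is precisely this approximate disjointness: the events $F_j$ place strongly correlated constraints on the local density of prime factors of $n$ along nested intervals, and the moment lemma must rule out that too many of them can hold simultaneously.
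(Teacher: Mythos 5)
Your overall strategy is the paper's: the pigeonhole lemma to extract $\frac{\tau(n_{<y})}{\log y}\Delta^{(\log y)}(n_{[y,x)})$, the \cite{fgk}-based high-probability lower bound $\Delta^{(\log y)}(n_{[y,x)})\gg(\Log_2 x)^{\eta_*-\eps/2}$ (with failure probability $\ll(\Log_2 x)^{-1}$), and a second-moment argument over $\asymp\sqrt{\Log_2 x}$ thresholds $y$ with $\Log_2 y=\alpha\Log_2 x+O(\sqrt{\Log_2 x})$. However, there is a genuine quantitative gap in your final aggregation step. By pinning each threshold to a \emph{single} event $F_j=\{\omega(n_{<y_j})=k_j\}$ with $k_j\approx 2\Log_2 y_j$, you incur \emph{two} independent local-limit factors, not one: by your own Poisson identity, $\frac{2^{k_j}}{\log y_j}\P(\omega(n_{<y_j})=k_j)\asymp(\Log_2 x)^{-1/2}$, and the constraint $\omega(n_{<x})=r$ additionally forces $\omega(n_{[y_j,x)})=r-k_j$, contributing another factor $\asymp(\Log_2 x)^{-1/2}$ by the local limit theorem for $\omega(n_{[y_j,x)})$ near its mean. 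So each pair $(y_j,k_j)$ contributes only $\asymp(\Log_2 x)^{\eta_*-\eps/2-o(1)}/\Log_2 x$ to the expectation, and even with perfect disjointness the sum over the $\asymp\sqrt{\Log_2 x}$ admissible thresholds reaches only $(\Log_2 x)^{\eta_*-\eps/2-o(1)-1/2}$, short of the target $(\Log_2 x)^{\eta_*-\eps}$ by essentially $(\Log_2 x)^{1/2-\eps/2}$. The number of ``essentially distinct'' thresholds cannot be increased beyond $O(\sqrt{\Log_2 x})$, since $\Log_2 y$ must stay within $O(\sqrt{\Log_2 x})$ of $\alpha\Log_2 x$ and consecutive thresholds must be separated in $\Log_2$-scale for the correlation decay to kick in, so the missing $\sqrt{\Log_2 x}$ cannot be recovered from the $y$-sum alone.

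The missing idea is that one must \emph{not} pin $\omega(n_{<y})$ to one value, but also aggregate over the $\asymp\sqrt{\Log_2 x}$ values of $\omega(n_{<y})$ in the window around $2\Log_2 y$ (this is exactly the sense in which $\tau(n_{<y})/\log y$ has ``bounded mean'' in the heuristic). The paper does this via a layer-cake decomposition: with $E_{y,u}=\{\omega(n_{<x})=r,\ \omega(n_{<y})=\frac{\Log_2 y}{\log 2}+u\}$ and $G_u=\bigcup_{y\in\CY}E_{y,u}$, one has $\E[\one\{\omega(n_{<x})=r\}\max_{y}\frac{\tau(n_{<y})}{\Log y}]\ge\sum_u 2^{u-1}\P(G_u)$, and the second-moment (Cauchy--Schwarz) argument with the correlation bound $\P(E_{y,u}\cap E_{y',u})\ll\frac{2^{-u}}{\Log_2 x}\exp(-c|\Log_2 y-\Log_2 y'|)$ is run \emph{separately for each fixed} $u$, giving $\P(G_u)\gg 2^{-u}/\sqrt{\Log_2 x}$; summing over the $\asymp\sqrt{\Log_2 x}$ values of $u$ then yields the required $\gg1$, i.e.\ both square-root gains (over $u$ and over $y$) are needed. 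Your plan captures the $y$-gain but discards the $u$-gain, so as written the bound does not close; repairing it essentially forces you into the paper's $E_{y,u}$/$G_u$ bookkeeping (or an equivalent weighted second moment in which $\tau(n_{<y})/\log y$ is kept as a random weight rather than fixed at $2^{k_j}/\log y_j$). The remaining points you leave implicit (the tail event $\log n_{<y}\le 10(\Log_3 x)\Log y$ needed so the pigeonhole loss is only $(\Log_2 x)^{o(1)}$, and decoupling the $\Delta^{(\log y)}$ event from the conditioning on $\omega(n_{[y,x)})$) are minor and handled by routine Markov-type estimates.
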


Clearly, Theorem \ref{main-2} implies \eqref{ednx} on summing 
over $r$.

We first record some basic information (cf.~\cite{norton}, \cite[Theorems 08, 09]{HT-book}) about the distribution of $\omega(n_{<x})$ (or more generally $\omega(n_{[y,x)})$), reminiscent of the Bennett inequality \cite{Bennett} but with a crucial additional square root gain in the denominator; it can also be thought of as a ``large deviations'' variant of the Erd\H{o}s--Kac law.

\begin{proposition}\label{prop:omnyx}  Let $1 \leq y \leq x$, with $x$ sufficiently large,
and let $k$ be a positive integer with
\[
k =t \cdot (\Log_2 x - \Log_2 y),
\]
where $t \le 10$. Then we have
\[
 \P\big(\omega(n_{[y,x)}) = k\big) \asymp \frac{\exp\big( - (\Log_2 x - \Log_2 y) Q(t) \big)}{\sqrt{k}},
\]
where $Q(t)=t\log t-t+1$.
\end{proposition}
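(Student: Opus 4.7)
The plan is to realize $\omega(n_{[y,x)}) = \sum_{y\le p<x} X_p$ as a sum of independent Bernoulli variables with $\P(X_p=1)=1/(p+1)$, and then apply the saddle-point method to its probability generating function
\[
F(z) \coloneqq \E\bg[z^{\omega(n_{[y,x)})}\bg] = \prod_{y\le p<x}\bgg(1+\frac{z-1}{p+1}\bggg).
\]
By Mertens' theorem, the mean $\lambda \coloneqq F'(1)$ equals $\Log_2 x-\Log_2 y + O(1)$, so $k = t\lambda+O(1)$. Stirling's formula combined with the identity $k\log t-\lambda(t-1) = \lambda Q(t)$ (valid at $k=t\lambda$) shows that the target bound is equivalent, up to an absolute multiplicative constant, to the Poissonian local limit estimate $\P(\omega(n_{[y,x)})=k) \asymp \lambda^k e^{-\lambda}/k!$ in the range $k\le 10\lambda$, which is classical in the spirit of \cite{norton} and \cite[Theorems 08--09]{HT-book}.

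First I would decompose $\log F(z) = \lambda(z-1)+g(z)$, where $g(z) \coloneqq \sum_{y\le p<x}\bg(\log(1+\tfrac{z-1}{p+1}) - \tfrac{z-1}{p+1}\bg)$ is holomorphic, with $g,g',g''=O(1)$ uniformly on $\{|z|\le 20\}$, thanks to $\sum_p p^{-2}<\infty$. Then Cauchy's formula on the circle of radius $t$ gives
\[
\P\bg(\omega(n_{[y,x)})=k\bg) = \frac{1}{2\pi}\int_{-\pi}^{\pi}\frac{F(te^{i\theta})}{(te^{i\theta})^k}\,\dee\theta,
\]
and a Taylor expansion at $\theta=0$ yields
\[
\log F(te^{i\theta})-k\log(te^{i\theta}) = -\lambda Q(t) + O(1) + i(t\lambda-k)\theta - \tfrac12 t\lambda\, \theta^2 + O(\lambda|\theta|^3).
\]
Since $t\lambda-k=O(1)$ and $t\lambda\asymp k$, integration over the central arc $|\theta|\le k^{-1/3}$ produces the Gaussian contribution of size $\asymp e^{-\lambda Q(t)}/\sqrt k$; on the complementary arc, the factorwise identity
\[
\bgg|\frac{1+(te^{i\theta}-1)/(p+1)}{1+(t-1)/(p+1)}\bggg|^2 = 1-\frac{4pt\sin^2(\theta/2)}{(p+t)^2}
\]
yields a tail decay of order $\exp(-ck\sin^2(\theta/2))$, which is comfortably negligible.

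The main technical obstacle is uniformity when $\lambda$ is of order $1$ (i.e.\ $y$ close to $x$): the saddle scale $1/\sqrt{t\lambda}$ can exceed $\pi$ and the Gaussian approximation breaks down. In that regime $k$ is a bounded positive integer, and one can instead compute $\P(\omega(n_{[y,x)}) = k) = \bg(\prod_{y\le p<x}\tfrac{p}{p+1}\bg)\, e_k\bg(\{\tfrac{1}{p}\}_{y\le p<x}\bg)$ directly, where $e_k$ denotes the $k$-th elementary symmetric polynomial; Newton's identities together with $\sum 1/p^m = O(1)$ for $m\ge 2$ give $e_k\asymp \lambda^k/k!$, again matching the Poisson density. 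Merging the two regimes and keeping all implicit constants absolute (depending only on the bound $t\le 10$) is the main bookkeeping task; the analytic content itself is well-established.
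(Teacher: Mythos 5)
Your route is genuinely different from the paper's. The paper's proof is elementary: via \eqref{efn-3} it reduces the problem to estimating $R_{y,x}=\sum_{y\le p_1<\dots<p_k<x}1/(p_1\cdots p_k)$, bounds this above by $\big(\sum_{y\le p<x}1/p\big)^k/k!$, bounds it below by discarding the non-distinct tuples, handles the degenerate situations (small $y$, or Mertens error comparable to $\Log_2 x-\Log_2 y$) by replacing $y$ with $y_1=\max(y,C)$, and finishes with Stirling. Your generating-function/saddle-point argument is a legitimate alternative (the factorwise modulus identity you quote is correct, and the reduction to the Poisson-type local estimate $\P\asymp\lambda^k e^{-\lambda}/k!$ is in principle right), but it is analytically heavier and, as written, has gaps.

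Concretely: (i) $g$ is not holomorphic, let alone $O(1)$, on all of $\{|z|\le 20\}$: each prime $p\in[y,x)$ with $p<20$ gives a singularity at $z=-p$. This is harmless because you only need estimates for $z=te^{i\theta}$ with $\theta$ near $0$, where $|p+te^{i\theta}|\ge p$, but the claim must be localized. (ii) The cubic error in your expansion is $O(\lambda t|\theta|^3)\asymp O(k|\theta|^3)$, not $O(\lambda|\theta|^3)$; with the error as you state it, even on the Gaussian bulk $|\theta|\asymp k^{-1/2}$ one only gets $\lambda|\theta|^3\asymp k^{-1/2}/t$, which is unbounded when $t$ is small, so the central-arc conclusion does not follow from the displayed expansion (with the correct error it does). (iii) Most importantly, the degenerate regime is ``$k$ bounded'' (equivalently $t\lambda\asymp k$ bounded), not ``$\lambda\asymp 1$, i.e.\ $y$ close to $x$''. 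When $k=O(1)$ but $\lambda$ is large (so $t$ is small), the saddle width $1/\sqrt{t\lambda}\asymp 1$ again fills the circle, and your minor-arc bound $\exp(-ck\sin^2(\theta/2))\ge e^{-ck}=e^{-O(1)}$ is of the same order as the claimed main term, so the complementary arc is not negligible and the lower bound is not established; this case is covered by neither of your two regimes as scoped. The fix is to split according to the size of $k$: your elementary-symmetric-polynomial computation works for all bounded $k$, irrespective of $\lambda$. Finally, both there and in your opening reduction, more than ``$\sum_p 1/p^m=O(1)$ for $m\ge 2$'' is needed when $\Log_2 x-\Log_2 y$ is small: for the lower bound one must know that $\sum_{y\le p<x}1/p^2$ is small compared with $\big(\sum_{y\le p<x}1/p\big)^2$ and that $\lambda\asymp \Log_2 x-\Log_2 y$ with absolute constants, which holds because either $y$ exceeds a large absolute constant (so the error terms are $O(1/\log y)$) or else $\lambda\asymp\Log_2 x$ is large --- exactly the role played by the paper's device $y_1=\max(y,C)$. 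With these repairs your argument closes, but they are genuine omissions rather than mere bookkeeping.
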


\begin{proof} By \eqref{efn-3} we have
\[
\P\big(\omega(n_{[y,x)}) = k\big) \asymp \frac{\Log y}{\Log x}\, R_{y,x}, 
\quad\text{where}\quad R_{y,x} \coloneqq \sum_{y \leq p_1 < \dots < p_k < x} \frac{1}{p_1 \dots p_k}.
\]
Note that
\[
R_{y,x}\le \frac{(\sum_{y\le p<x}1/p)^k}{k!} 
	= \frac{(\Log_2x-\Log_2y+O(1/\Log y))^k}{k!}\ll \frac{(\Log_2x-\Log_2y)^k}{k!} 
\]
by Mertens' estimate \cite[Theorem 3.4(b)]{dk-book} and our assumption that $t\le 10$. Using the Stirling approximation $k! \asymp k^{1/2} (k/e)^k$, we obtain the claimed upper bound.

Lastly, we prove a corresponding lower bound.  Let $C$ be sufficiently large
and assume that $x \ge e^C$.
Set $y_1 = \max(y,C)$, and define
\[
L \coloneqq \sum_{y_1 \leq p < x} \frac{1}{p}.
\]
By Mertens' estimate, we have 
\begin{equation}
	\label{eq:L-mertens}
	L = \Log_2 x - \Log_2 y_1 + O(1/\Log y_1)
	\ge \Log_2x-\Log_2y_1-1/20,
\end{equation}
since $y_1\ge C$ and we may assume that $C$ is large enough. By hypothesis, $1\le k\le 10(\log_2 x-\Log_2 y)$, thus $\Log_2 x - \Log_2 y \ge 1/10$. If $y_1=y \ge C$, then we have $L \ge \frac12 (\Log_2 x-\Log_2 y)$.  Furthermore, if $y_1=C>y$, then $x\ge e^y$, whence $L \ge \Log_2 x - O(\Log_3 x) \ge \frac12 \Log_2 x$ provided that $C$ is large enough.  In both
cases,
\begin{equation}\label{L-lwr}
L \ge \frac12 (\Log_2 x-\Log_2 y),
\end{equation}
and it follows that $k\le 20L$.
In addition, we have
\begin{align*}
R_{y,x} &\ge R_{y_1,x} = \frac{L^k}{k!} 
- \frac{1}{k!} \sum_{y_1 \leq p_1,\dots,p_k < x \text{, not distinct}} \frac{1}{p_1 \dots p_k} \\
&\ge \frac{L^k}{k!} - \binom{k}{2} \frac{1}{k!} L^{k-2} \sum_{y_1 \leq p < x} \frac{1}{p^2}  \\
&\ge \frac{L^k}{k!} \bgg( 1 - \frac{k^2}{2L^2 (y_1-1)}  \bgg)
\ge \frac12 \cdot \frac{L^k}{k!},
\end{align*}
the last inequality holding for large enough $C$, since $y_1 \ge C$. By the upper bound on $t$ and inequality \eqref{L-lwr}, we have
\[
L^k \gg_C (L+\Log_2C + 1)^k .
\]
Together with \eqref{eq:L-mertens}, and since $\Log_2y_1\le \Log_2y+\Log_2C$, we conclude that
\[
L^k \gg_C (\Log_2 x-\Log_2 y)^k.
\]
Hence, the claimed lower bound on $R_{y,x}$ follows by Stirling's formula $k! \asymp k^{1/2} (k/e)^k$.
\end{proof}

We record two particular corollaries of the above proposition of interest, which follow from a routine Taylor expansion of the function $Q(t)$.  

\begin{corollary}[Special cases]\label{special}
Fix $B\ge 1$, let $1\le y\le x$ with $\Log_2 x - \Log_2 y \ge 2B^2$, and let $k\in\N$.  We have the two following cases:
\begin{itemize}
\item[(i)] If $\bg|k - (\Log_2 x - \Log_2 y)\bg| \le B \sqrt{\Log_2 x - \Log_2 y} $, then
\[
P(\omega(n_{[y,x)}) = k) \asymp_B \frac{1}{\sqrt{\Log_2 x - \Log_2 y}}.
\]
\item[(ii)] If $\bg|k - 2(\Log_2 x - \Log_2 y)\bg| \le B \sqrt{\Log_2 x - \Log_2 y}$, then
\[
P(\omega(n_{[y,x)}) = k) \asymp_B \frac{\Log x}{2^k \Log y} \cdot \frac{1}{\sqrt{\Log_2 x - \Log_2 y}}.
\]
\end{itemize}
\end{corollary}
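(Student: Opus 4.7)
The plan is to derive the corollary directly from Proposition \ref{prop:omnyx} by Taylor-expanding $Q$ around the appropriate critical point. Set $L \coloneqq \Log_2 x - \Log_2 y$ and $t \coloneqq k/L$, so that Proposition \ref{prop:omnyx} gives $\P(\omega(n_{[y,x)}) = k) \asymp \exp(-L\, Q(t))/\sqrt{k}$. In both cases the hypothesis $L \geq 2B^2$ yields $B/\sqrt{L} \leq 1/\sqrt{2}$, which keeps $t$ in $(0,10]$ (so the proposition applies) and ensures $k \asymp_B L$, hence $\sqrt{k} \asymp_B \sqrt{L}$.

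For case (i), I would write $t = 1 + s/\sqrt{L}$ with $|s| \leq B$, and use $Q(1) = 0$, $Q'(1) = 0$, $Q''(1) = 1$ to Taylor expand, obtaining
\[
L\, Q(t) = \frac{s^2}{2} + O(B^3/\sqrt{L}).
\]
Since $|s| \leq B$, the right-hand side is bounded in absolute value by a quantity depending only on $B$, so $\exp(-L\, Q(t)) \asymp_B 1$. Combining with $\sqrt{k} \asymp_B \sqrt{L}$ yields the claim.

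For case (ii), I would write $t = 2 + s/\sqrt{L}$ with $|s| \leq B$, and use $Q(2) = 2\log 2 - 1$, $Q'(2) = \log 2$, $Q''(2) = 1/2$ to expand:
\[
L\, Q(t) = L(2\log 2 - 1) + s \sqrt{L}\, \log 2 + \frac{s^2}{4} + O(B^3/\sqrt{L}).
\]
Exponentiating and using the identity $e^L = \Log x/\Log y$ (built into the definition of $L$), I would obtain
\[
\exp(-L\, Q(t)) = \frac{\Log x}{\Log y} \cdot 2^{-2L} \cdot 2^{-s\sqrt{L}} \cdot e^{-s^2/4} \cdot (1 + O_B(1/\sqrt{L})).
\]
The key algebraic observation is that $k = tL = 2L + s\sqrt{L}$, so the two powers of $2$ coalesce into $2^{-k}$. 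Since $|s| \leq B$ makes $e^{-s^2/4} \asymp_B 1$, the stated estimate follows upon dividing by $\sqrt{k} \asymp_B \sqrt{L}$.

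There is no genuine obstacle; the argument is a routine local expansion about the minimum of $Q$ in case (i) and about $t=2$ in case (ii). The one point worth flagging is the identity $2^{-2L} \cdot 2^{-s\sqrt{L}} = 2^{-k}$, which converts the linear term in the Taylor expansion into exactly the $2^k$ prefactor appearing in the statement; this is what makes the otherwise unusual-looking factor $\Log x/(2^k \Log y)$ in case (ii) the natural one.
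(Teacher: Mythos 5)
Your proof is correct and is exactly the route the paper takes: Corollary \ref{special} is derived from Proposition \ref{prop:omnyx} by the same routine Taylor expansion of $Q$ about $t=1$ and $t=2$, with the linear term at $t=2$ combining with $2^{-2L}$ to give the $2^{-k}$ factor just as you observe. The only micro-caveat is that $e^{L}=\Log x/\Log y$ is an exact identity only when $\Log y\ge e$; for $1\le y<e^{e}$ it holds up to a bounded factor, which the $\asymp_B$ absorbs.
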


\section{Main reduction}

Let the notation and hypotheses be as in Theorem \ref{main-2}.  We allow all implied constants to depend on $\eps$. We write
\[
r = (1+\alpha) \Log_2 x,
\]
thus 
\begin{equation}\label{alpha-range}
\eps \le \alpha \le 1-\eps.
\end{equation}
We may assume $x$ sufficiently large depending on $\eps$. For any $1 \leq y < x$, we have
\[
\omega(n_{<x}) = \omega(n_{<y}) + \omega(n_{[y,x)}).
\]
To take advantage of the splitting by $y$, we introduce a generalization
\begin{equation}\label{deltav-def}
\Delta^{(v)}(n) \coloneqq \max_{u \in \R} \#\{d|n: e^u<d\le e^{u+v}\} 
\end{equation}
of the Erd\H{o}s--Hooley Delta function for any $v>0$, and use the following simple application of the pigeonhole principle.

\begin{lemma}\label{pigeon}  For any $1 \leq y < x$ and any $v \geq \log n_{<y}$, we have
\[
\Delta(n_{<x}) \geq \frac{\tau(n_{<y})}{2v+1}  \cdot \Delta^{(v)}(n_{[y,x)}).
\]
\end{lemma}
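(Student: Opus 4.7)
The plan is to exploit the coprime factorization $n_{<x} = n_{<y} \cdot n_{[y,x)}$, together with a straightforward pigeonhole argument. Writing $m = n_{<y}$ and $N = n_{[y,x)}$, every divisor of $n_{<x}$ admits a unique factorization $d = d_1 d_2$ with $d_1 \mid m$ and $d_2 \mid N$, because $\gcd(m,N) = 1$. This will turn lower bounds for $\Delta^{(v)}(N)$ and for $\tau(m)$ into a lower bound for $\Delta(n_{<x})$.

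First, I would select a real number $u$ achieving the maximum in the definition \eqref{deltav-def}, so that there exist $k \coloneqq \Delta^{(v)}(N)$ divisors $d_2^{(1)}, \dots, d_2^{(k)}$ of $N$ lying in $(e^u, e^{u+v}]$. For every divisor $d_1$ of $m$ and every $i \in \{1,\dots,k\}$, the product $d_1 d_2^{(i)}$ is a divisor of $n_{<x}$, and by coprimality these $\tau(m) \cdot k$ products are pairwise distinct. Since $1 \le d_1 \le m$ and $v \ge \log m$, we have $\log d_1 \in [0,v]$, so
\[
\log\bgg(d_1 d_2^{(i)}\bgg) \in (u, u + 2v].
\]

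Next I apply pigeonhole. Cover the interval $(u, u+2v]$ by the $\lceil 2v \rceil$ consecutive unit intervals $(u+j-1, u+j]$ for $j = 1, \dots, \lceil 2v \rceil$; since $\lceil 2v \rceil \le 2v+1$, at least one of these unit intervals contains at least
\[
\frac{\tau(m) \cdot k}{2v+1}
\]
of the logarithms $\log(d_1 d_2^{(i)})$. Exponentiating, the corresponding divisors $d_1 d_2^{(i)}$ lie in an interval of the form $(e^{u'}, e^{u'+1}]$, and hence
\[
\Delta(n_{<x}) \;\ge\; \frac{\tau(n_{<y}) \cdot \Delta^{(v)}(n_{[y,x)})}{2v+1},
\]
as claimed.

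There is no genuine obstacle here: the one thing to be careful about is the assumption $v \ge \log n_{<y}$, which is exactly what ensures the products $d_1 d_2^{(i)}$ remain confined to a logarithmic window of length $2v$ (rather than $v + \log n_{<y}$, which could be much larger). Without this hypothesis, the pigeonhole denominator would grow with $\log n_{<y}$ and the bound would be useless; with it, the denominator is controlled purely by $v$, making the lemma usable with the choice of $v \asymp \Log_2 x$ that one anticipates in the main argument.
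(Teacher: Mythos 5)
Your proof is correct and follows essentially the same route as the paper: pick the optimal window for $\Delta^{(v)}(n_{[y,x)})$, multiply those divisors by all $\tau(n_{<y})$ divisors of the coprime factor $n_{<y}$ (distinctness by unique factorization, containment in a window of logarithmic length $2v$ thanks to $v \ge \log n_{<y}$), and pigeonhole over at most $2v+1$ unit intervals. No issues.
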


\begin{proof}  By \eqref{deltav-def}, there exists $u$ such that there are 
$\Delta^{(v)}(n_{[y,x)})$ divisors $b$ of $n_{[y,x)}$ in $(e^u,e^{u+v}]$. 
Multiplying one of these divisors $b$ by any of the $\tau(n_{<y})$ divisors
$a$ of $n_{<y}$ gives a divisor $ab$ of $n_{<x}$ in the range $(e^u, e^{u+2v}]$. 
These $\tau(n_{<y})  \Delta^{(v)}(n_{[y,x)})$ divisors are all distinct.  
Covering this range by at most $2v+1$ intervals of the form 
$(e^{u'}, e^{u'+1}]$, we obtain the claim from the pigeonhole principle.
\end{proof}

Let 
\begin{equation}\label{y-range}
 \CY \coloneqq \bggg\{ y>0 : |\Log_2 y - \alpha \Log_2 x| \leq \sqrt{\Log_2 x}  \,; \quad \frac{\Log_2 y}{\log 2} \in \Z \bggg\}
\end{equation}
and for each $y \in \CY$, let $E_y$ denote the event
\[
\log n_{<y} \leq 10(\Log_3 x) \Log y
\]
and let $F_y$ denote the event
\begin{equation}\label{fy-def}
 \Delta^{(\Log y)}(n_{[y,x)}) \geq (\Log_2 x)^{\eta_* - \eps/2}.
\end{equation}
As we shall see later, both events $E_y$ and $F_y$ will hold with very high probability.
As $\Delta^{(v)}$ is clearly monotone in $v$, we have
\[
\Delta^{(10(\Log_3 x) \Log y)}(n_{[y,x)}) \geq \Delta^{(\Log y)}(n_{[y,x)}).
\]
By Lemma \ref{pigeon} with $v=10(\Log_3 x)\Log y$, if the events $E_y$ and $F_y$ both hold for some $y \in \CY$, then
\[
\Delta(n_{<x}) \gg \frac{(\Log_2 x)^{\eta_* - \eps/2}}{\Log_3 x} \cdot \frac{\tau(n_{<y})}{\Log y}.
\]
Thus Theorem \ref{main-2} will follow if we show that
\begin{equation}\label{eq:1st-reduction}
\E \bigg[  \one\bg\{ \omega(n_{<x})=r \bg\} \max_{y \in \CY} \bgg(\frac{\tau(n_{<y})}{\Log y} \one\{E_y\cap F_y\}\bgg)\bigg] \gg 1.
\end{equation}

Controlling the left-hand side is accomplished with the following three propositions. In their statements, recall that $\neg G$ denotes the negation of the event $G$.

\begin{proposition}\label{prop:Ey-fail} We have
\[
\E \bigg[   \one\bg\{ \omega(n_{<x})=r \bg\}  \max_{y \in \CY} \bgg( \frac{\tau(n_{<y})}{\Log y} \one\{\neg E_y\} \bgg) \bigg] \ll \frac{1}{(\Log_2 x)^{9}}.
\]
\end{proposition}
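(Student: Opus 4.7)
The plan is to apply a union bound over $y\in\CY$, drop the indicator $\one\{\omega(n_{<x})=r\}$ (bounding it by $1$), and control each remaining summand by a Chernoff/exponential moment bound.

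Concretely, I would first observe that
\[
\E \bigg[ \one\bg\{\omega(n_{<x})=r\bg\} \max_{y \in \CY} \frac{\tau(n_{<y})}{\Log y} \one\{\neg E_y\} \bigg]
\le \sum_{y\in\CY} \frac{1}{\Log y}\, \E \bg[\tau(n_{<y}) \one\{\neg E_y\}\bg].
\]
Since $\neg E_y$ is the event $\log n_{<y} > 10(\Log_3 x)\Log y$, a Markov bound applied to $e^{\lambda \log n_{<y}}$ gives, for any $\lambda > 0$,
\[
\E[\tau(n_{<y}) \one\{\neg E_y\}] \le e^{-10\lambda(\Log_3 x)\Log y}\, \E[\tau(n_{<y}) n_{<y}^\lambda],
\]
and by independence of the $n_p$ across distinct primes together with the computation $\E[\tau(n_p) n_p^\lambda] = (p+2p^\lambda)/(p+1)$,
\[
\E\bg[\tau(n_{<y}) n_{<y}^\lambda\bg] = \prod_{p<y} \frac{p+2p^\lambda}{p+1}.
\]

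Next I would take $\lambda = 1/\Log y$, so that $p^\lambda \le e$ for every $p<y$. Expanding
\[
\log \frac{p+2p^\lambda}{p+1} = \log\Big(1 + \frac{2p^\lambda - 1}{p+1}\Big) = \frac{2p^\lambda - 1}{p+1} + O(p^{-2}),
\]
and applying Mertens' theorem together with the estimate
\[
\sum_{p<y} p^{\lambda-1} = \Log_2 y + O(1),
\]
which follows from partial summation and the prime number theorem (after the substitution $s = \lambda \log t$ in $\int_2^y t^{\lambda - 1}/\log t \, dt$), summation gives
\[
\sum_{p<y}\log\frac{p+2p^\lambda}{p+1} = 2\sum_{p<y} p^{\lambda-1} - \sum_{p<y}\frac{1}{p+1} + O(1) = \Log_2 y + O(1).
\]
Hence $\E[\tau(n_{<y}) n_{<y}^\lambda] \asymp \Log y$, and combining with the Chernoff prefactor $e^{-10\Log_3 x} = (\Log_2 x)^{-10}$ yields
\[
\E[\tau(n_{<y}) \one\{\neg E_y\}] \ll (\Log_2 x)^{-10} \Log y.
\]

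Finally, dividing by $\Log y$ and summing over $\CY$ — whose cardinality is $O(\sqrt{\Log_2 x})$ by \eqref{y-range}, as $\Log_2 y$ takes discrete values spaced by $\log 2$ in an interval of length $2\sqrt{\Log_2 x}$ — gives the bound $O((\Log_2 x)^{-19/2})$, which is much smaller than $(\Log_2 x)^{-9}$. The only mildly delicate step is the moment estimate $\E[\tau(n_{<y}) n_{<y}^{1/\Log y}] \asymp \Log y$, but this is a routine partial summation/Mertens argument; everything else is union bound and Chernoff. Note in particular that the factor $\one\{\omega(n_{<x})=r\}$ is not needed — the event $\neg E_y$ is already rare enough on its own to produce the required savings.
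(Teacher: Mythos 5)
Your proof is correct and follows essentially the same route as the paper: bound the maximum over $\CY$ by a sum (the paper uses $|\CY|\ll\sqrt{\Log_2 x}$ times the worst term), drop the indicator $\one\{\omega(n_{<x})=r\}$, apply Markov with the tilt $n_{<y}^{1/\Log y}$ to gain $(\Log_2 x)^{-10}$, and evaluate $\E\big[\tau(n_{<y})\,n_{<y}^{1/\Log y}\big]\ll\Log y$ by factoring over the independent $n_p$ and using Mertens. Your two-sided asymptotic $\asymp\Log y$ via partial summation is slightly more than needed (the paper gets the upper bound directly from $p^{1/\Log y}=1+O(\log p/\Log y)$), but the argument is sound.
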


\begin{proof} We have 
\begin{align*}
\E \bigg[   \one\bg\{ \omega(n_{<x})=r \bg\}  \max_{y \in \CY} \bgg( \frac{\tau(n_{<y})}{\Log y} \one\{\neg E_y\} \bgg) \bigg] 
	&\le \sum_{y \in \CY}  \E \bigg[   \one\bg\{ \omega(n_{<x})=r \bg\}  \frac{\tau(n_{<y})}{\Log y} \one\{\neg E_y\}  \bigg] \\
	&\ll \sqrt{\Log_2 x} \, \max_{y \in \CY} \frac{ \E\big[ \tau(n_{<y}) \one\{\neg E_y\} \big] }{\Log y} ,
\end{align*}
since $|\CY| \ll (\Log_2 x)^{1/2}$. If $E_y$ fails, then
$n_{<y}^{1/\Log y} \ge \exp(10 \Log_3 x) = (\Log_2 x)^{10}$,
and so, by Markov's inequality,
\[
 \E\big[ \tau(n_{<y}) \one\{\neg E_y\} \big]
 	 \le (\Log_2 x)^{-10} \; \E\big[ \tau(n_{<y})  n_{<y}^{1/\Log y}\big]
\]
for all $y \in \CY$.  Splitting $n_{<y}$ into the independent factors $n_p$,
we get
\begin{equation}
	\label{eq:ub on tau}
		 \E \big[ \tau(n_{<y}) n_{<y}^{1/\Log y} \big] 
		= \prod_{p < y} \left( \frac{p}{p+1} + \frac{2 p^{1/\Log y}}{p+1} \right)
		= \prod_{p < y} \left( 1 + \frac{1}{p} + O\left( \frac{\log p}{p \Log y} + \frac{1}{p^2} \right) \right),
\end{equation}
which, by Mertens' theorems, equals $O(\Log y)$, and the proof is complete.
\end{proof}

\begin{proposition}\label{prop:supy}
We have 
\[
\E \bigg[ \one\{\omega(n_{<x})=r\}  \max_{y \in \CY} \frac{\tau(n_{<y})}{\Log y} \bigg] \gg 1.
\]
\end{proposition}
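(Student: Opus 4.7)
The plan is to combine a first-moment estimate with an approximate-disjointness argument. Set $X_y := \tau(n_{<y})/\Log y = 2^{\omega(n_{<y})}/\Log y$ for $y \in \CY$. Using the independence of $n_{<y}$ and $n_{[y,x)}$ and the identity $\tau(n_{<y}) = 2^{\omega(n_{<y})}$, I would write
\[
\E\bigl[\one\{\omega(n_{<x})=r\}\,X_y\bigr] = \frac{1}{\Log y}\sum_{k_1}2^{k_1}\,\P\bigl(\omega(n_{<y})=k_1\bigr)\,\P\bigl(\omega(n_{[y,x)})=r-k_1\bigr)
\]
and apply Proposition \ref{prop:omnyx}. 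The exponential tilt $2^{k_1}$ shifts the saddle of the first probability factor from $k_1 \approx \Log_2 y$ to $k_1 \approx 2\Log_2 y$ (the unique root of $\log 2 = Q'(t) = \log t$); for $y \in \CY$ with $\Log_2 y \approx \alpha \Log_2 x$, this coincides with the Gaussian peak $k_1 \approx r - (\Log_2 x - \Log_2 y) \approx 2\alpha \Log_2 x$ of the second factor. Integration over the Gaussian window of width $\asymp\sqrt{\Log_2 x}$ gives $\E[\one\{\omega(n_{<x})=r\}\,X_y]\asymp 1/\sqrt{\Log_2 x}$, and summing over $|\CY| \asymp \sqrt{\Log_2 x}$ produces $\E\bigl[\one\{\omega(n_{<x})=r\}\sum_{y\in\CY}X_y\bigr] \asymp 1$.

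To upgrade from the sum to the maximum, I would truncate at a threshold $c$ comparable to the typical size of $X_y$ on the dominant range (a power of $\Log y$), set $\CY_+ := \{y\in\CY : X_y\ge c\}$, and use the elementary inequality $\sum_{y\in\CY} X_y \le |\CY_+|\,\max_{y}X_y + c\,|\CY|$ together with Cauchy--Schwarz to obtain, modulo the negligible $c|\CY|$ term,
\[
\Bigl(\E\bigl[\one\{\omega=r\}\textstyle\sum_y X_y\bigr]\Bigr)^2 \le \E\bigl[\one\{\omega=r\}\,|\CY_+|^2\bigr]\cdot \E\bigl[\one\{\omega=r\}\max_y X_y^2\bigr].
\]
Provided one has the approximate-disjointness estimate $\E[\one\{\omega=r\}\,|\CY_+|^2]=O(1)$ (which should be the content of Lemma \ref{moment}), a further deterministic truncation of $X_y$ at a polynomial-in-$\Log x$ threshold (allowed by the event $E_y$ discussed in the preceding section) permits an $L^2$-to-$L^1$ passage for the max, yielding the desired $\E[\one\{\omega=r\}\,\max_y X_y]\gg 1$.

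The principal difficulty is the approximate-disjointness estimate, which reduces to a cross-term bound on $\E[\one\{\omega=r\}\,X_{y_1}X_{y_2}]$ for $y_1<y_2$ in $\CY$. Using the triple decomposition $n_{<x} = n_{<y_1}\,n_{[y_1,y_2)}\,n_{[y_2,x)}$ together with the identity
\[
X_{y_1}X_{y_2} = \frac{4^{\omega(n_{<y_1})}\,2^{\omega(n_{[y_1,y_2)})}}{\Log y_1\,\Log y_2},
\]
a joint saddle-point analysis over $(k_1,k_2) = (\omega(n_{<y_1}),\omega(n_{[y_1,y_2)}))$ is required. The tilts $4^{k_1}$ and $2^{k_2}$ push the saddle modes to $k_1\approx 4\Log_2 y_1$ and $k_2\approx 2(\Log_2 y_2 - \Log_2 y_1)$, and the main technical task is to verify that balancing these tilted saddles against the global Gaussian constraint $\omega(n_{<x})=r$ produces a genuine decay of the cross term in the index separation $|\log_2\Log y_2 - \log_2\Log y_1|$, strong enough to keep the doubly-indexed sum of order $O(1)$ rather than of order $|\CY|^2$.
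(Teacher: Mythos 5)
Your first-moment computation is correct and matches the paper's heuristic (the tilt $2^{k_1}$ moves the saddle to $\omega(n_{<y})\approx 2\Log_2 y$, which meshes with $\omega(n_{<x})=r$ when $\Log_2 y\approx\alpha\Log_2 x$, giving $\E[\one\{\omega(n_{<x})=r\}X_y]\asymp(\Log_2 x)^{-1/2}$), and your instinct that the crux is approximate disjointness in $y$ with decay in $|\Log_2 y-\Log_2 y'|$ is exactly the content of Lemma \ref{moment}. The genuine gap is in your sum-to-max upgrade. The values of $X_y$ that carry the first moment are not concentrated at one scale: they are $2^u$ with $u$ ranging over the window $\CU$ of length $\asymp\sqrt{\Log_2 x}$, each dyadic level contributing $\asymp(\Log_2 x)^{-1/2}$, and each such value is a fixed positive power of $\Log x$. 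Consequently your final $L^2$-to-$L^1$ passage cannot close: from $\E[\one\{\omega=r\}\max_y X_y^2]\gg1$ and a deterministic truncation at level $M$ you only get $\E[\one\{\omega=r\}\max_y X_y]\gg 1/M$, and $M$ must be at least the dominant scale $(\Log x)^{(2\log 2-1)\alpha+o(1)}$ for the truncation not to destroy the first moment; tuning the two thresholds differently does not help, since requiring $\E[\one\{\omega=r\}|\CY_+|^2]\ll 1/M$ contradicts the diagonal lower bound $\E[\one\{\omega=r\}|\CY_+|]\gg 2^{-u_-}/\sqrt{\Log_2 x}$ once $M\gtrsim 2^{u_+}$, the discrepancy being the factor $2^{u_+-u_-}=e^{\asymp\sqrt{\Log_2 x}}$ coming from the spread of $\CU$. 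Two secondary inaccuracies point the same way: the discarded term $c|\CY|\,\P(\omega(n_{<x})=r)$ is \emph{not} negligible for $\alpha$ in the interior of $(0,1)$ when $c$ is at the dominant scale (its exponent $(2\log2-1)\alpha-Q(1+\alpha)$ is strictly positive there), and reducing $\E[\one\{\omega=r\}|\CY_+|^2]=O(1)$ to bounds on the weighted cross terms $\E[\one\{\omega=r\}X_{y_1}X_{y_2}]$ is delicate because the weighted second moment of $\sum_y X_y$ is genuinely polynomially large in $\Log x$ (the $4^{\omega(n_{<y_1})}$ tilt shifts the saddle again), so it is the weights themselves, not just the pair counts, that must be controlled.

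The paper's proof fixes exactly this by freezing the weight before applying any second-moment argument: since $\Log_2 y/\log 2\in\Z$ for $y\in\CY$, one has $\tau(n_{<y})/\Log y=2^u$ exactly on the event $E_{y,u}=\{\omega(n_{<x})=r,\ \omega(n_{<y})=\Log_2 y/\log2+u\}$, and a layer-cake/Abel summation gives $\E[\one\{\omega=r\}\max_y X_y]\ge\sum_u 2^{u-1}\P(G_u)$ with $G_u=\bigcup_{y\in\CY}E_{y,u}$. At each fixed level $u\in\CU$ the second-moment (Cauchy--Schwarz) method is applied to the \emph{bounded indicator count} $\sum_{y\in\CY}\one\{E_{y,u}\}$, where Lemma \ref{moment} gives first moment $\asymp 2^{-u}(\Log_2 x)^{-1/2}$ and second moment $\ll 2^{-u}(\Log_2 x)^{-1/2}$ (the cross-term decay coming from $\omega(n_{[y,y')})$ being forced to the atypical value $(\Log_2 y'-\Log_2 y)/\log 2$, costing $e^{-Q(1/\log2)(\Log_2 y'-\Log_2 y)}$); hence $\P(G_u)\gg 2^{-u}(\Log_2 x)^{-1/2}$, and summing $2^{u-1}\P(G_u)$ over the $\asymp\sqrt{\Log_2 x}$ values $u\in\CU$ yields the proposition. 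If you stratify your own argument by the dyadic size of $X_y$ and sum over levels at the end (noting $\sum_u\max_y X_y\one\{X_y\asymp 2^u\}\ll\max_y X_y$ pointwise), you essentially recover this proof; without that stratification the argument as written loses an unaffordable factor.
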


Proposition \ref{prop:supy} will be proved in Section \ref{sec:supy}.

\begin{proposition}\label{prop:Fy-fail}
We have  $\P\big(\neg F_y\big) \ll (\Log_2 x)^{-1}$.
\end{proposition}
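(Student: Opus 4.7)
The plan is to reduce Proposition \ref{prop:Fy-fail} to a quantitative version of the lower bound construction from \cite{fgk}. Since $\Delta^{(v)}$ is monotone in $v$ and $\log y \geq 1$ for $y \in \CY$, we have $\Delta^{(\log y)}(n_{[y,x)}) \geq \Delta(n_{[y,x)})$, so it suffices to prove that $\Delta(n_{[y,x)}) \geq (\Log_2 x)^{\eta_* - \eps/2}$ with probability at least $1 - O(1/\Log_2 x)$.

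First I would verify that $\Log_2 n_{[y,x)} = \Log_2 x + O(1)$ with high probability. The quantity $\log n_{[y,x)}$ is a sum of independent pieces $\log p \cdot \one\{p \mid n_{[y,x)}\}$ with expected value $\log x - \log y + O(1) = (1+o(1))\log x$, since $\log y \leq (\log x)^{1-\eps}$, and a standard variance computation gives the claim. Consequently, the target $(\Log_2 x)^{\eta_* - \eps/2}$ matches, up to implied constants, the lower bound $(\Log_2 n_{[y,x)})^{\eta_* - \eps/2}$ coming from Theorem (ii) of \cite{fgk}. I would then apply that construction directly to $n_{[y,x)}$; the primes it utilises all lie in ranges compatible with $[y,x)$ thanks to the restrictions $\eps \Log_2 x \leq \Log_2 y \leq (1-\eps)\Log_2 x$ imposed via \eqref{alpha-range} and the definition of $\CY$, which ensure both a non-trivial lower cutoff and an abundant supply of prime support above $y$.

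The main obstacle is upgrading the qualitative statement of Theorem (ii) in \cite{fgk}---that $\Delta(n) \geq (\Log_2 n)^{\eta_* - \eps}$ for all but $o(x)$ integers $n \in [1,x]$---to the quantitative failure bound $\P(\neg F_y) \ll 1/\Log_2 x$. I expect this is handled by revisiting the construction of \cite{fgk}: its probabilistic events are controlled by at most polynomially many (in $\Log_2 x$) concentration requirements on prime-count variables in appropriately chosen dyadic sub-ranges, and standard second-moment or Chernoff-type estimates then yield failure probabilities decaying super-polynomially in $\Log_2 x$, comfortably superseding the $1/\Log_2 x$ target. A routine check via Mertens' theorem, as in \eqref{efn-3}, confirms that the construction transfers from the idealised $1/a$-random model of Definition \ref{etastar-def} to the prime-based model used to define $n_{[y,x)}$.
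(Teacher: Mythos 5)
There is a genuine gap, and it sits exactly at the point you label as the ``main obstacle.'' The definition of $\beta_k$ (and hence of $\eta_*$) is purely qualitative: it only asserts that $\P\big(m(\A\cap[D^c,D])\ge k\big)\to 1$ as $D\to\infty$, with no rate whatsoever. Your plan to obtain the quantitative bound $\P(\neg F_y)\ll (\Log_2 x)^{-1}$ by ``revisiting the construction of \cite{fgk}'' and asserting that its failure probabilities decay super-polynomially in $\Log_2 x$ is not a proof but a restatement of the difficulty; nothing in the quoted results provides such a rate, and extracting one from the (long and delicate) arguments of \cite{fgk} is precisely what one wants to avoid. The paper's actual solution is different: it treats the definition of $\beta_k$ as a black box, discretizes $n_{[y,x)}$ at scale $\lambda=y^{1/(20\Log_2 x)}$ to produce a random index set $\A$ with $\P(a\in\A)\ge 1/a$ independently, bounds $\Delta^{(\Log y)}(n_{[y,x)})\ge m(\A)$ on the event $\omega(n_{[y,x)})<2\Log_2 x$, and then amplifies via the supermultiplicativity $m(A_1\cup A_2)\ge m(A_1)m(A_2)$ over $\ell\asymp \Log_3 x$ disjoint windows $[D_i^c,D_i]$. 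Each window succeeds (i.e.\ $m(\A'\cap J_i)\ge k$) with probability $\ge 1-\delta$ by the mere definition of $\beta_k$, and Bennett's inequality applied to these $\ell$ independent trials gives a failure probability $\exp(-c_\delta\ell)\ll(\Log_2 x)^{-3/2}$, with $\delta$ chosen small in terms of $k$ and $\eps$. Note that even this only yields polynomial decay in $\Log_2 x$ (there are only $\asymp\Log_3 x$ independent trials), which is why the target exponent must be engineered carefully; super-polynomial decay is not available by these means.

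A second problem is your opening reduction. Passing from $\Delta^{(\Log y)}$ to the window-$1$ function $\Delta$ via monotonicity is legitimate but discards exactly the slack that makes the transference to subsum multiplicity cheap: with $2\Log_2 x$ prime factors each located only up to a factor $\lambda$, equal index-sums give divisors clustered in intervals of length $\asymp(\Log_2 x)\log\lambda$, which is $<\log y$ but far larger than $1$. Moreover, Theorem (ii) of \cite{fgk} concerns typical integers $n\le x$ (whose prime factors range over all of $[1,x]$), so it cannot be applied ``directly to $n_{[y,x)}$,'' an integer all of whose prime factors lie in $[y,x)$ and which typically has only $(1-\alpha)\Log_2 x$ of them; making that application rigorous, at window size $1$, would require redoing a substantial portion of the fine-scale analysis of \cite{fgk} rather than citing it. So both halves of your outline — the transfer of the \cite{fgk} lower bound to $n_{[y,x)}$ and the upgrade to an explicit exceptional probability — are left unestablished, and the second is the heart of the proposition.
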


Proposition \ref{prop:Fy-fail} will be proved in Section \ref{sec:prop-Fy}.

\medskip

Now we complete the proof of \eqref{eq:1st-reduction}, assuming the three propositions above. Firstly, by Proposition \ref{prop:Fy-fail} and the independence of $F_y$ and $n_{<y}$, we have
\begin{align*}
\E\bigg[ \one\{\omega(n_{<x})=r\} 	
	\max_{y \in \CY} \frac{\tau(n_{<y}) } {\Log y}\one\{\neg F_y\} \bigg]
&\ll (\Log_2 x)^{1/2} \max_{y \in \CY}   \frac{\E\big[\tau(n_{<y})\one\{\neg F_y\}\big]}{\Log y}  \\
&= (\Log_2 x)^{1/2} \max_{y \in \CY}   \frac{\E\big[\tau(n_{<y})\big]    \P\big(\neg F_y\big)}{\Log y}  \\
&\ll (\Log_2 x)^{-1/2} \max_{y \in \CY} \frac{\E\big[\tau(n_{<y})\big]}{\Log y}  .
\end{align*}
Arguing as in the proof of Proposition \ref{prop:Ey-fail}, we have $\E\big[\tau(n_{<y})\big]\ll \Log y$, and thus 
\[
\E\bigg[ \one\{\omega(n_{<x})=r\} 	
\max_{y \in \CY} \frac{\tau(n_{<y}) } {\Log y}\one\{\neg F_y\} \bigg]
	 \ll \frac{1}{(\Log_2 x)^{1/2}}. 
\]
Combining this with Propositions \ref{prop:Ey-fail} and \ref{prop:supy},
we deduce \eqref{eq:1st-reduction}, and hence Theorem \ref{main-2}.

\section{Proof of Proposition \ref{prop:supy}}\label{sec:supy}
Let $y\in \CY$, so that $\Log_2 y / \log 2$ is an integer.
Consider the events
\[
E_{y,u}\coloneqq \bggg\{ \omega(n_{<x})=r,\  \omega(n_{<y}) =  \frac{\Log_2 y}{\log 2} + u\bggg\}.
\]
In the event $E_{y,u}$ we have $\tau(n_{<y})=2^u\Log y$. Also, consider the events
\[
G_u\coloneqq \bigcup_{y\in \CY} E_{y,u}, \qquad H_u = \bigcup_{u'\ge u} G_u.
\] 
 Thus,
\begin{align*}
\E \bigg[\one\{\omega(n_{<x})=r\} \max_{y\in \CY}  \frac{\tau(n_{<y})}{\Log y} \bigg]
&= \E \bigg[ \max_{y\in \CY} \sum_{u\in \Z} 2^u \cdot \one\{E_{y,u}\} \bigg] \\
&= \sum_{u\in \Z} 2^u \P \bg( H_u\setminus  H_{u+1} \bg)\\
&= \sum_{u\in \Z} 2^u \bg( \P(H_u) - \P(H_{u+1}) \bg)\\
&= \sum_{u\in \Z} 2^{u-1} \P(H_u)\\
&\ge  \sum_{u\in \Z} 2^{u-1} \P(G_u).
\end{align*}
In fact, $\sum_u 2^{u-1} \P(H_u) \le \sum_u 2^u G_u$, so we have lost at most a factor
$1/2$ in the final step.
We will restrict attention to the most important values of $u$, namely $u\in \CU$,
where
\[
\CU \coloneqq \bggg\{ u\in \Z: \bgg| u - \frac{\log 4 - 1}{\log 2} \alpha \Log_2 x \bgg| \le \sqrt{\Log_2 x} \bggg\}.
\]
This choice is informed by the calculations in \cite[Proposition 4.1]{KT}. We then observe that
\begin{equation}\label{Gu}
\E \bigg[\one\{\omega(n_{<x})=r\} \max_{y\in \CY}  \frac{\tau(n_{<y})}{\Log y} \bigg]
\ge \sum_{u\in \CU} 2^{u-1} \P(G_u) .
\end{equation}
It remains to bound $\P(G_u)$ from below for $u\in\CU$. This follows essentially by more general results of Ford \cite{ford:smirnov and primes}, but we may give a simple and self-contained argument in the special case we are interested in. To do so, we employ the second moment method.  More precisely, we have the following estimates:

\begin{lemma}\label{moment}  We have
\begin{equation}\label{eyu}
 \P( E_{y,u} ) \asymp \frac{2^{-u}}{\Log_2 x} \qquad (u\in \CU, y\in \CY)
\end{equation}
 and, for all $y,y'\in \CY$ and $u\in \CU$,
\begin{equation}\label{eyu-2}
 \P\bg( E_{y,u} \cap E_{y',u} \bg) \ll \frac{2^{-u}}{\Log_2 x} \exp\bgg( - Q(1/\log 2)\bg|\Log_2 y-\Log_2 y'\bg| \bgg) .
\end{equation}
\end{lemma}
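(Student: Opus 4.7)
The plan is to use independence to decompose $\P(E_{y,u})$ and $\P(E_{y,u}\cap E_{y',u})$ into products of probabilities, one per relevant prime range, and then estimate each factor via Corollary \ref{special} and Proposition \ref{prop:omnyx}.

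For \eqref{eyu}, the independence of $n_{<y}$ and $n_{[y,x)}$ gives $\P(E_{y,u}) = \P(\omega(n_{<y})=k_1)\cdot\P(\omega(n_{[y,x)})=k_2)$, where $k_1 \coloneqq \Log_2 y/\log 2 + u$ and $k_2 \coloneqq r-k_1$. The first step is to use the definitions of $\CU$ and $\CY$ to verify the asymptotic identities
\[
k_1 = 2\Log_2 y + O\bigl(\sqrt{\Log_2 x}\bigr), \qquad k_2 = \Log_2 x - \Log_2 y + O\bigl(\sqrt{\Log_2 x}\bigr);
\]
the first follows from $\frac{1}{\log 2}+\frac{\log 4 -1}{\log 2} = 2$ together with $\Log_2 y = \alpha\Log_2 x + O(\sqrt{\Log_2 x})$, and the second is then immediate. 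These put us in case (ii) of Corollary \ref{special} applied to $\omega(n_{<y})$ (viewed as $\omega(n_{[2,y)})$), and case (i) applied to $\omega(n_{[y,x)})$, yielding
\[
\P(\omega(n_{<y})=k_1) \asymp_\eps \frac{\Log y}{2^{k_1}\sqrt{\Log_2 y}}, \qquad \P(\omega(n_{[y,x)})=k_2) \asymp_\eps \frac{1}{\sqrt{\Log_2 x - \Log_2 y}}.
\]
The key algebraic simplification is the identity $2^{k_1} = 2^{\Log_2 y/\log 2}\cdot 2^u = e^{\Log_2 y}\cdot 2^u = \Log y\cdot 2^u$, which cancels the $\Log y$ factor. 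Combined with the bounds $\Log_2 y,\, \Log_2 x - \Log_2 y \asymp_\eps \Log_2 x$ (consequences of $\eps\le\alpha\le1-\eps$), this yields $\P(E_{y,u})\asymp_\eps 2^{-u}/\Log_2 x$.

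For \eqref{eyu-2}, by the symmetry of the claim and the reduction to part \eqref{eyu} when $y=y'$, we may assume $y<y'$. Then $n_{<y}$, $n_{[y,y')}$ and $n_{[y',x)}$ are independent, and $E_{y,u}\cap E_{y',u}$ is the conjunction $\omega(n_{<y})=k_1$, $\omega(n_{[y,y')})=k'$ and $\omega(n_{[y',x)})=k''$, where $k' \coloneqq (\Log_2 y'-\Log_2 y)/\log 2$ and $k'' \coloneqq r-k_1-k'$. The first and third factors are handled exactly as in part \eqref{eyu} (with $y'$ replacing $y$ in the $[y',x)$ piece), contributing $\asymp_\eps 1/(2^u\sqrt{\Log_2 y})$ and $\asymp_\eps 1/\sqrt{\Log_2 x}$ respectively. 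For the middle factor, $k' = t(\Log_2 y' - \Log_2 y)$ with $t = 1/\log 2 < 10$, so Proposition \ref{prop:omnyx} applies and yields
\[
\P(\omega(n_{[y,y')})=k') \ll \frac{\exp\bigl(-(\Log_2 y'-\Log_2 y)\,Q(1/\log 2)\bigr)}{\sqrt{k'}} \le \exp\bigl(-Q(1/\log 2)\,\bigl|\Log_2 y-\Log_2 y'\bigr|\bigr),
\]
using $k'\ge 1$, which holds since $\Log_2 y/\log 2$ and $\Log_2 y'/\log 2$ are distinct integers. Multiplying the three estimates gives the claimed bound.

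The main obstacle is bookkeeping: verifying that each $k_i$ satisfies the small-deviation hypothesis $|k_i - \text{mean}|\le B\sqrt{\text{variance}}$ needed for the appropriate case of Corollary \ref{special}, and executing the identity $2^{k_1}=\Log y\cdot 2^u$ that is responsible for the tight $2^{-u}$ dependence on $u$. This algebraic cancellation is precisely what motivates the definition of $\CU$ as being centred at $\frac{\log 4-1}{\log 2}\alpha\Log_2 x$.
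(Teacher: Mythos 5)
Your proposal is correct and follows essentially the same route as the paper: factor $\P(E_{y,u})$ (resp.\ $\P(E_{y,u}\cap E_{y',u})$) via the independence of $n_{<y}$, $n_{[y,y')}$, $n_{[y',x)}$, apply Corollary \ref{special}(i),(ii) to the outer factors and Proposition \ref{prop:omnyx} to the middle one. The only difference is presentational: you make explicit the cancellation $2^{k_1}=2^u\Log y$ and the check $k'\ge 1$, which the paper leaves implicit.
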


Before we prove Lemma \ref{moment}, let us see how to use it to bound $\P(G_u)$ from below. 

\medskip

For any given $u\in \CU$, Lemma \ref{moment} yields that
\[
\E\bigg[ \sum_{y \in \CY} \one\{E_{y,u}\} \bigg] 
	\asymp \frac{2^{-u}}{\sqrt{\Log_2 x}}
\]
and
\[
\E \bigg[ \bgg(\sum_{y \in \CY} \one\{E_{y,u}\}\bgg)^2 \bigg]
\ll \frac{2^{-u}}{\sqrt{\Log_2 x}} .
\]
On the other hand, the Cauchy--Schwarz inequality implies that
\[
\E\bigg[ \sum_{y \in \CY} \one\{E_{y,u}\} \bigg]^2 \le 
	\P(G_u) \cdot \E \bigg[ \bgg(\sum_{y \in \CY} \one\{E_{y,u}\}\bgg)^2 \bigg]
\]
It follows that
\[
\P(G_u) \gg \frac{2^{-u}}{\sqrt{\Log_2 x}}. 
\]
for any $u\in \CU$. Inserting this into \eqref{Gu} completes the proof of Proposition \ref{prop:Ey-fail}.

\begin{proof}[Proof of Lemma \ref{moment}]
  We begin with \eqref{eyu}.  Splitting 
$ \omega(n_{<x}) = \omega(n_{<y}) + \omega(n_{[y,x)})$
and using the independence of $n_{<y}$ and $n_{[y,x)}$, we may factor
\begin{align*}
 \P(E_{y,u}) = 
 	\P\bigg( \omega(n_{<y}) =  \frac{\Log_2 y}{\log 2} + u \bigg) 
\cdot \P\bigg( \omega(n_{[y,x)}) = r - \frac{\Log_2 y}{\log 2} - u \bigg).
\end{align*}
From the range of $y$ and $u$, we have
\[
\frac{\Log_2 y}{\log 2} + u  = 2 \alpha \Log_2 x + O\left( \sqrt{\Log_2 x} \right) = 2 \Log_2 y + O\left(\sqrt{\Log_2 y}\right)
\]
and
\begin{align*}
 r - \frac{\Log_2 y}{\log 2}- u  &= (1-\alpha) \Log_2 x + O\left( \sqrt{\Log_2 x} \right) \\
 &= \Log_2 x - \Log_2 y + O\left(\sqrt{\Log_2 x - \Log_2 y}\right).
 \end{align*}
We may thus invoke parts (i), (ii) of Corollary \ref{special} and conclude that
\[
\P\bigg( \omega(n_{<y}) = \frac{\Log_2 y}{\log 2}+ u \bigg)  \asymp \frac{2^{-u}}{\sqrt{\Log_2 x}}
\]
and
\[
\P\bigg( \omega(n_{[y,x)}) = r -  \frac{\Log_2 y}{\log 2}- u \bigg) \asymp \frac{1}{\sqrt{\Log_2 x}},
\]
and \eqref{eyu} follows.

Now we establish \eqref{eyu-2}.  Without loss of generality, we may assume that $y'>y$.  Splitting
\[
\omega(n_{<x}) = \omega(n_{<y}) + \omega(n_{[y,y')}) + \omega(n_{[y',x)})
\]
and using the joint independence of $n_{<y}, n_{[y,y')}, n_{[y',x)}$, we may factor
\begin{align*}
 \P\bg( E_{y,u} \cap E_{y',u} \bg)
 	 &= \P\bigg( \omega(n_{<y}) = \frac{\Log_2 y}{\log 2}+ u \bigg) \\
	&\quad \times \P\bigg( \omega(n_{[y,y')}) = \frac{\Log_2 y'}{\log 2}- \frac{\Log_2 y}{\log 2}\bigg)\\
	&\quad \times \P\bigg( \omega(n_{[y',x)}) = r -  \frac{\Log_2 y'}{\log 2}- u \bigg).
\end{align*}
As before, we have
\[
\P\bigg( \omega(n_{<y}) = \frac{\Log_2 y}{\log 2}+ u \bigg)  \asymp \frac{2^{-u}}{\sqrt{\Log_2 x}}
\]
and
\[
\P\bigg( \omega(n_{[y',x)}) = r -  \frac{\Log_2 y'}{\log 2}- u \bigg) \asymp \frac{1}{\sqrt{\Log_2 x}}
\]
while from Proposition \ref{prop:omnyx}  we also have
\[
\P\bigg( \omega(n_{[y,y')}) = \frac{\Log_2 y' - \Log_2 y}{\log 2} \bigg)
\ll 
	\exp\bgg( - Q(1/\log 2) \bg(\Log_2 y' - \Log_2 y\bg) \bgg),
\]
and the claim \eqref{eyu-2} follows.
\end{proof}

\section{Proof of Proposition \ref{prop:Fy-fail}}\label{sec:prop-Fy}

Fix $\eps>0$; we allow all implied constants to depend on $\eps$.  We will also need the parameters $k\in \N$, sufficiently large in terms of $\eps$, and $\delta>0$, sufficiently
small in terms of $\eps$ and $k$.
We may assume that $x$ is sufficiently large depending on $\eps, k,\delta$.
We have 
\begin{align*}
 \P\bgg( \omega( n_{[y,x)} ) \geq 2 \Log_2 x \bgg)  &\le \frac{\E \big[\tau(n_{[y,x)})\big]}{(\Log x)^{\log 4}} \\
 &= \frac{1}{(\Log x)^{\log 4}} \prod_{y \le p<x} \frac{p+2}{p+1} \ll \frac{1}{(\Log x)^{\log 4-1}},
 \end{align*}
 using Mertens' theorem.
By the definition \eqref{fy-def} of $F_y$, it will thus suffice to show that
\begin{equation}\label{dest}
\P\bgg( \Delta^{(\Log y)}(n_{[y,x)}) < (\Log_2 x)^{\eta_* - \eps/2}, \ \omega(n_{[y,x)}) < 2 \Log_2 x \bgg) \ll (\Log_2 x)^{-1}.
\end{equation}
It is now convenient to replace the random integer $n_{[y,x)}$ with a more discretized model.  We introduce the scale
\[
\lambda \coloneqq y^{\frac{1}{20 \Log_2 x}}
\]
(note that this is large depending on $\eps,k,\delta$, since $\Log_2 y \asymp \Log_2 x$ by \eqref{alpha-range} and \eqref{y-range}) and let $J$ be the set of all integers $a$ such that 
\[
y \leq \lambda^{a-2} < \lambda^{a-1} \leq x.
\]
In particular we have
\[
a \geq 20 \Log_2 x + 2,
\]
so that $a$ is sufficiently large depending on $\eps,k,\delta$.

Define the random subset $\A$ of $J$ to consist of all the indices $a \in J$ for which one has $n_p = p$ for some prime $p\in[\lambda^{a-2},\lambda^{a-1})$.  Observe that the events $\{a \in \A\}$ are mutually independent, with
\begin{align*}
 \P( a \in\A )
 	&= 1 - \prod_{\lambda^{a-2} \leq p < \lambda^{a-1}} \frac{p}{p+1} \\
&= 1 - \frac{a-2}{a-1}\bgg(1+ O\bgg(e^{-c\sqrt{\log y}}\bgg) \bgg)  \\
	&= \frac{1}{a-1} + O\bgg(e^{-c\sqrt{\log y}}\bgg) 
\end{align*}
for large enough $y$, thanks to the Prime Number Theorem \cite[Theorem 8.1]{dk-book} and the bound $\lambda^{a-2} \ge y$. Note that $a\le\Log x \ll e^{(\Log y)^{1/3}}$. Hence, if $x$ is large enough, we find that
\[
 \P( a \in\A ) \ge \frac{1}{a}\quad\text{for all}\ a\in J .
\]

Recall Definition \ref{etastar-def} of the subsum multiplicity $m(A)$ of a finite set $A$ of natural numbers. We claim that if $\omega(n_{[y,x)}) < 2\Log_2 x$, then
\begin{equation}\label{mu-bound}
 \Delta^{(\Log y)}(n_{[y,x)}) \geq m(\A).
\end{equation}
Indeed, if $\mu=m(\A)$, then Definition \ref{etastar-def} implies that we can find distinct subsets $A_1,\dots,A_\mu$ of $\A$ such that
\[
\sum_{a \in A_1} a = \dots = \sum_{a \in A_\mu} a.
\]
Also, for each $a \in \A$, we can find a prime $p_a|n_{[y,x)}$ such that
\begin{equation}\label{lam2}
\lambda^{a-2} \leq p_a < \lambda^{a-1}.
\end{equation}
In particular, the cardinality of $\A$ (and hence of $A_1,\dots,A_\mu$) is at most $\omega(n_{[y,x)})$, and hence at most $2 \Log_2 x$.  Taking logarithms in \eqref{lam2}, we see that 
\[
|\log p_a - a \log \lambda| \leq 2 \log \lambda
\]
for all $a \in \A$. Thus for all $1 \leq j < j' \leq \mu$ we have from the triangle inequality, \eqref{a-sum}, and the bound $|\A|\le2\Log_2x$ that
\[
\bgg|\sum_{a \in A_j} \log p_a - \sum_{a \in A_{j'}} \log p_a\bgg| \leq 8 (\Log_2 x) \log \lambda < \log y
\]
thanks to the choice of $\lambda$.  We conclude that all the sums $\sum_{a \in A_j} \log p_a$, $j=1,\dots,\mu$ lie in an interval of the form $(u, u+\log y]$, hence all the products $\prod_{a \in A_j} p_a$ lie in an interval of the form $(e^u, e^{u+\log y}]$.  As these products are all distinct factors of $n_{[y,x)}$, the claim \eqref{mu-bound} follows.

\medskip

In view of \eqref{mu-bound}, it now suffices to establish the bound
\[
\P\Big( m(\A) < (\Log_2 x)^{\eta_* - \eps/2}  \Big) \ll (\Log_2 x)^{-1} .
\]
The events $a \in \A$ for $a \in J$ are independent with a probability of at least $1/a$.  One can then find a random subset $\A'$ of $\A$ where the events $a \in \A'$ for $a \in J$ are independent with a probability of \emph{exactly} $1/a$; for instance, one could randomly eliminate each $a \in \A$ from $\A'$ with an independent probability of $1 - \frac{1}{a \P(a \in \A)}$.  Clearly $m(\A) \geq m(\A')$, so it will suffice to show that
\begin{equation}\label{muap}
 \P\bgg( m(\A') < (\Log_2 x)^{\eta_* - \eps/2}  \bgg) \ll (\Log_2 x)^{-1}.
\end{equation}

Now we use a ``tensor power trick'' going back to the work of Maier and Tenenbaum \cite{MT1} (see also \cite[Lemma 2.1]{fgk}).  Observe the supermultiplicativity inequality
\begin{equation}\label{mua12}
 m(A_1 \cup A_2) \geq m(A_1) m(A_2)
\end{equation}
whenever $A_1, A_2$ are disjoint finite sets of natural numbers.  To exploit this, we introduce the exponent $0 < c < 1$ by the formula
\begin{equation}\label{c-def}
 c^{\eta_*-\eps/4} = 1/k,
\end{equation}
where $k$ was defined at the start of this section.  By construction, the set $J$ takes the form $[a_-, a_+]\cap\Z$, where
\[
a_- = \frac{\log y}{\log \lambda} + O(1) \asymp \Log_2 x
\]
and
\[
a_+ = \frac{\log x}{\log \lambda} + O(1) \gg \Log^{\eps/2} x
\]
thanks to \eqref{y-range} and \eqref{alpha-range}.  In particular,
\[
\Log_2 a_+ - \Log_2 a_- \geq \Log_3 x - O(\Log_4 x).
\]
As a consequence of this and \eqref{c-def}, we can find an integer $\ell$ satisfying
\begin{equation}\label{m-int}
\ell = \frac{\Log_3 x}{\log(1/c)} - O(\Log_4 x) = (\eta_* - \eps/4) \frac{\Log_3 x}{\log k} - O(\Log_4 x)
\end{equation}
and disjoint sets $J_1,\dots,J_\ell$ in $J$, where each $J_i$ is of the form $J_i = [D_i^c, D_i]\cap\Z$ for some $D_i \gg \Log_2 x.$ From \eqref{mua12} and monotonicity, we then have
\begin{equation}\label{mua-lower}
 m(\A') \geq \prod_{i=1}^\ell m(\A' \cap J_i).
\end{equation}

From the definition of $\eta_*$ in \eqref{eta-star}, we have $c < \beta_k$ if $k$ is large enough.  From the definition of $\beta_k$ in Definition \ref{etastar-def}, we conclude (as $D_i$ is sufficiently large depending on $k, \delta$) that 
\[
\P\bgg( m( \A' \cap J_i ) \geq k \bgg) \geq 1 - \delta
\]
for all $i=1,\dots,\ell$.  Furthermore, the events $m(\A' \cap J_i) \geq k$ are independent, because the sets $\A' \cap J_i$ are independent.  By the Bennett inequality \cite{Bennett}, the probability that there are fewer than $(1-\eps/5)\ell$ values of $i$ with $m(\A'\cap J_i)\ge k$ is at most
\[
\exp \bggg\{ - \ell\cdot (\eps/5-\delta) \bgg( \log\bgg(1+ \frac{\eps/5-\delta}{\delta}\bgg)-1 \bgg) \bggg\}.
\]
We choose $\delta$ small enough so that
\[
 (\eps/5-\delta) \bgg( \log\bgg(1+ \frac{\eps/5-\delta}{\delta}\bgg)-1 \bgg) \ge
 \frac{2\log k}{\eta_* - \eps/4},
\]
so that by \eqref{m-int}, the above probability is $\ll (\Log_2 x)^{-3/2}$. By \eqref{mua-lower}, we conclude that
\[
\P( m(\A') < k^{(1-\frac{\eps}{5}) \ell} ) \ll (\Log_2 x)^{-3/2}.
\]
 From another appeal to \eqref{m-int} we have
\[
k^{(1-\frac{\eps}{5}) \ell} \geq (\Log_2 x)^{\eta_* - \eps/2}.
\]
The claim \eqref{muap} follows.

\end{document}